    \definecolor{mygreen}{rgb}{0,.6,0}
    \definecolor{myblue}{rgb}{0,0,.6}
    \definecolor{frank}{rgb}{1,.2,.2}
    \definecolor{stefan}{rgb}{.2,.7,.2}
    \definecolor{aaron}{rgb}{.2,.2,1}
\newcounter{FNC}[page]
\def\fauxfootnote#1{{\addtocounter{FNC}{2}$^\fnsymbol{FNC}$%
     \let\thefootnote\relax\footnotetext{$^\fnsymbol{FNC}$\Magenta{#1}}}}
\numberwithin{equation}{section}
\newtheorem{theorem}{Theorem}[section]
\newtheorem{proposition}[theorem]{Proposition}
\newtheorem{lemma}[theorem]{Lemma}
\newtheorem{corollary}[theorem]{Corollary}
\newtheorem{defi}[theorem]{Definition}
\newtheorem{exam}[theorem]{Example}
\newtheorem{rema}[theorem]{Remark}
\newenvironment{definition}[1][]{\rm\begin{defi}[#1]\rm}{\end{defi}}
\newenvironment{example}[1][]{\rm\begin{exam}[#1]\rm}{\end{exam}}
\newenvironment{remark}[1][]{\rm\begin{rema}[#1]\rm}{\end{rema}}
\newif\ifhrule\hrulefalse
\newcommand{\CTO}[1]{\begin{picture}(5.2,11)\put(1.2,0){\includegraphics{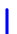}}
    \put(1.2,8.5){\tiny #1}\end{picture}}
\newcommand{\CTI}[2]{\begin{picture}(11.5,12)(-1,0)\put(1.5,0){\includegraphics{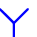}}
    \put(0,9){\tiny #1} \put(8,9){\tiny #2} \end{picture}}
\newcommand{\CTIT}[3]{\begin{picture}(15.5,13)(-1.2,0)\put(1.5,0){\includegraphics{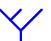}}
    \put(0,10){\tiny #1} \put(6,10){\tiny #2}\put(12,10){\tiny #3} \end{picture}}
\def\treeseparator{\bm\cdot}
\def\treeseparatorinline{\bm\cdot}
\newcommand{\zcompose}[2]{\frac{{#1}}{#2}}
\newcommand{\compose}[3]{\frac{{#1{\treeseparator}\dotsb{\treeseparator}#2}}{#3}}
\newcommand{\lcompose}[4]{\frac{{#1{\treeseparator}#2{\treeseparator}\dotsb{\treeseparator}#3}}{#4}}
\newcommand{\rcompose}[4]{\frac{{#1{\treeseparator}\dotsb{\treeseparator}#2{\treeseparator}#3}}{#4}}
\newcommand{\lrcompose}[5]{\frac{{#1{\treeseparator}#2{\treeseparator}\dotsb{\treeseparator}#3{\treeseparator}#4}}{#5}}
\newcommand{\zcomposeinline}[2]{{{#2}{\,\circ\,}(#1)}}
\newcommand{\composeinline}[3]{{#3{\,\circ\,}({#1{\treeseparatorinline}\dotsb{\treeseparatorinline}#2})}}
\newcommand{\Ddeg}[1]{(#1)}
\def\Cofree{\mathsf{C}}
\newcommand{\demph}[1]{{\it\color{myblue}{#1}}}
\newcommand{\K}{\mathbb{K}}
\newcommand{\calC}{\mathcal{C}}
\newcommand{\calD}{\mathcal{D}}
\newcommand{\calE}{\mathcal{E}}
\def\ssym{\s\mathit{Sym}}
\def\msym{\m\mathit{Sym}}
\def\ysym{\y\mathit{Sym}}
\def\csym{\c\mathit{Sym}}
\def\cksym{\ck\mathit{Sym}}
\def\psym{\mathcal{P}\mathit{Sym}}
\def\delsym{\Delta\mathit{Sym}}
\def\s{\mathfrak S}
\def\m{\mathcal M}
\def\y{\mathcal Y}
\def\ck{{\mathcal C}{\mathcal K}}
\def\c{\mathfrak C}
\def\p{\mathcal P}
\def\btau{\bm\tau}
\def\bkappa{\bm\kappa}
\def\balpha{\bm\alpha}
\newcommand{\sn}[1]{{\small\hspace{2pt}$#1$}}
\newcommand{\setR}{{\sf R}}
\newcommand{\setS}{{\sf S}}
\newcommand{\setT}{{\sf T}}
\def\id{\mathbbm{1}} 
\newcommand{\Placeholder}%
    {\raisebox{.09ex}{\footnotesize $\bullet$}}
\newcommand{\placeholder}%
    {\raisebox{.04ex}{\tiny $\bullet$}}
\def\bb{\boldbullet}
    \def\boldbullet{{\!\mbox{\LARGE$\mathbf\cdot$}}}
\def\psplit{\stackrel{\curlyvee}{\to}}
\def\rsplit{\stackrel{\curlyvee_{\!\!+}}{\longrightarrow}}
\author{Stefan Forcey} \address[S. Forcey]{
    Department of Theoretical and Applied Mathematics\\
    The University of Akron\\
    Akron, OH 44325-4002
    }
    \email{sf34@uakron.edu}  \urladdr{http://www.math.uakron.edu/\~{}sf34/}
\author{Aaron Lauve} \address[A. Lauve]{
    Department of Mathematics\\
    Loyola University of Chicago\\
    Chicago, IL 60660 
    }
    \email{lauve@math.luc.edu}  \urladdr{http://www.math.luc.edu/\~{}lauve/}
    \thanks{Research of Lauve supported in part by NSA grant H98230-10-1-0362.}
\author{Frank Sottile} \address[F. Sottile]{
    Department of Mathematics\\
    Texas A\&M University\\
    College Station, Texas \ 77843 
    }
    \email{sottile@math.tamu.edu}  \urladdr{http://www.math.tamu.edu/\~{}sottile}
    \thanks{Research of Sottile supported in part by NSF grants DMS-0701050 and DMS-1001615.}
\title{Cofree compositions of coalgebras}
\keywords{multiplihedron, composihedron, binary tree, cofree coalgebra, one-sided Hopf algebra, operads, species}
\subjclass[2000]{05E05, 16W30,  18D50}
\begin{document}

\begin{abstract}
 We develop the notion of the composition of two coalgebras, which arises naturally in
 higher category theory and in the theory of species.
 We prove that the composition of two cofree coalgebras is again cofree, and we give
 sufficient conditions that ensure the composition is a one-sided Hopf algebra. 
 We show these conditions are satisfied when
 one coalgebra is a graded Hopf operad $\calD$ and the other is a connected graded
 coalgebra with coalgebra map to $\calD$. We conclude by computing the primitive elements
 for compositions of coalgebras built on the vertices of multiplihedra, composihedra, and
 hypercubes.
\end{abstract}

\maketitle


\section*{Introduction}\label{sec: intro}

The Malvenuto-Reutenauer Hopf algebra of ordered
trees~\cite{MalReu:1995,AguSot:2005} and the Loday-Ronco Hopf
algebra of planar binary trees~\cite{LodRon:1998,AguSot:2006}
are cofree as coalgebras and are connected by cellular maps from the
permutahedra to the associahedra. Closely related polytopes include
Stasheff's multiplihedra~\cite{Sta:1970} and the
composihedra~\cite{forcey2}, and it is natural to study to what
extent Hopf structures may be placed on these objects. The map from
permutahedra to associahedra factors through the multiplihedra, and
in~\cite{FLS:2010} we used this factorization to place Hopf structures
on bi-leveled trees, which correspond to vertices of multiplihedra.

The multiplihedra form an operad module over the associahedra, and
this leads to the concept of painted trees, which also correspond to
the vertices of the multiplihedra. Moreover, expressing the Hopf
structures of~\cite{FLS:2010} in terms of painted trees relates these
Hopf structures to the operad module structure. Abstracting this
structure leads to the general notion of a composition of coalgebras,
which is a functorial construction of a graded coalgebra
$\calD\circ\calC$ from graded coalgebras $\calC$ and $\calD$.
We define this composition in Section \ref{sec: cccc} and show that it
preserves cofreeness. In Section \ref{sec: hopf}, we suppose that $\calD$
is a Hopf algebra and give sufficient conditions for the compositions
of coalgebras $\calD\circ\calC$ and $\calC\circ\calD$ to be one-sided Hopf algebras.
These also guarantee that these compositions are Hopf modules and comodule
algebras over $\calD$.

The definition of the composition of coalgebras is
familiar from the theory of operads. In general, a (nonsymmetric)
operad is a monoid in the category of graded sets, with product given by composition
(also known as the substitution product).
In Section \ref{sec: hopf} we show that an operad $\calD$ in
the category of connected graded coalgebras is automatically a Hopf algebra.
Those familiar with the theory of species will also recognize our construction.
The coincidence is explained in~\cite[Appendix~B]{AguMah:2010}: species and operads are
one-and-the-same.

 We conclude in Sections \ref{sec: painted}, \ref{sec: weighted}, and~\ref{sec: simplices}
 with a detailed look at several compositions of coalgebras that
enrich the understanding of well-known objects from category theory and algebraic topology.
In particular, we prove that the (one sided) Hopf algebra of simplices
in~\cite{ForSpr:2010} is cofree as a coalgebra.


\section{Preliminaries}\label{sec: prelims}

We work over a fixed field $\K$ of characteristic zero.
For a graded vector space $V = \bigoplus_n V_n$, we write $|v|=n$ and say $v$ has
\demph{degree} $n$ if $v\in V_n$.

\subsection{Hopf algebras and cofree coalgebras}\label{sec: cofree def}

A bialgebra $H$ is a unital associative algebra equipped with two algebra maps: a coproduct
homomorphism $\Delta\colon H\to  H\otimes H$ that is coassociative and a counit
homorphism $\varepsilon\colon H\to \K$ which plays the role of the identity for $\Delta$.
See \cite{Mont:1993} for more details.
A graded bialgebra
$H=(\bigoplus_{n\geq0}H_n,\bm\cdot,\Delta,\varepsilon)$ is \demph{connected} if
$H_0 = \K$.
In this case, a result of Takeuchi~\cite[Lemma~14]{Tak:71} guarantees the existence of an
antipode map for $H$, making it a Hopf algebra.

We recall Sweedler's coproduct notation for later use.
A coalgebra $\calC$ is a vector space $\calC$ equipped with a coproduct $\Delta$
and counit $\varepsilon$.
Given $c\in\calC$, the coproduct $\Delta(c)$ is written $\sum_{(c)} c'\otimes c''$.
Coassociativity means that
\[
    \sum_{(c),(c')} (c')'\otimes (c')'' \otimes c''\ =\
    \sum_{(c),(c'')} c'\otimes (c'')' \otimes (c'')''\ =\
    \sum_{(c)} c'\otimes c'' \otimes c''' \,,
\]
and the counit condition means that
$\sum_{(c)} \varepsilon(c')c'' = \sum_{(c)} c'\varepsilon(c'') = c$.

The \demph{cofree coalgebra} on a vector space $V$ has underlying vector space
$\Cofree(V):= \bigoplus_{n\geq0}V^{\otimes n}$.
Its counit is the projection $\varepsilon\colon\Cofree(V)\to\K=V^{\otimes 0}$.
Its coproduct is the \demph{deconcatenation coproduct}: writing ``$\backslash$''
for the tensor product in $V^{\otimes n}$, we have
\[
    \Delta(c_1\backslash \dotsb\backslash c_n)\ =\
    \sum_{i=0}^{n} (c_1\backslash \dotsb\backslash c_{i})
        \otimes (c_{i+1}\backslash \dotsb\backslash c_n) \,.
\]
Observe that $V$ is exactly the set of primitive elements of $\Cofree(V)$.
A coalgebra $\calC$ is \demph{cofreely cogenerated} by a subspace $V\subset\calC$ if
$\calC\simeq\Cofree(V)$ as coalgebras.
Necessarily, $V$ is the space of primitive elements of $\calC$.
Many of the coalgebras and Hopf algebras arising in combinatorics are cofree.
We recall a few key examples.

\subsection{Cofree Hopf algebras on trees}\label{sec: Hopf examples}

We describe three cofree Hopf algebras built on rooted planar binary
trees:
\demph{ordered trees} $\s_n$, \demph{binary trees} $\y_n$, and \demph{(left) combs} $\c_n$ on
$n$ internal nodes.
Let $\s_\bb$ denote the union $\bigcup_{n\geq0} \s_n$ and define $\y_\bb$ and $\c_\bb$
similarly.

\subsubsection{Constructions on trees}\label{sec: trees}

The nodes of a tree $t\in\y_n$ are a poset (with root maximal) whose Hasse diagram is the
internal edges of $t$.
An \demph{ordered tree $w=w(t)$} is a linear extension of this node poset of $t$ that we
indicate by placing a permutation in the gaps between its leaves.
Ordered trees are in bijection with the permutations of $n$.
The map $\tau \colon \s_n \to \y_n$ forgets the total ordering of the nodes of
an ordered tree $w(t)$ and gives the underlying tree $t$.
The map $\kappa \colon \y_n \to \c_n$ shifts all nodes of a tree $t$ to the right
branch from the root.
We let $\s_0=\y_0=\c_0=\includegraphics{figures/0.eps}$.
Note that $|\c_n|=1$ for all $n\geq0$.

Figure \ref{fig: many trees} gives some examples from $\s_\bb$, $\y_\bb$, and $\c_\bb$ and
indicates the natural maps $\tau$ and $\kappa$ between them.
See \cite{FLS:2010} for more details.
\begin{figure}[htb]
\[
\psset{unit=10pt}
\begin{pspicture}(8.5,20)
   \thicklines
   \put(.5,0){\small ordered trees $\s_\bb$}

   \put(4,18.1){%
         \begin{pspicture}(1,2)
         \put(0,0){\includegraphics{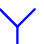}}
         \put(0,1.1){\sn{1}}
      \end{pspicture}}

   \put(3.5,14.0){%
      \begin{pspicture}(2,2.5)
         \put(0,0){\includegraphics{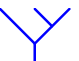}}
         \put(0,1.7){\sn{2}} \put(1,1.7){\sn{1}}
      \end{pspicture}}
   \put(3.5,10.7){%
      \begin{pspicture}(2,2.5)
         \put(0,0){\includegraphics{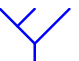}}
         \put(0,1.7){\sn{1}} \put(1,1.7){\sn{2}}
      \end{pspicture}}

   \put(0,6.0){%
      \begin{pspicture}(4,3.5)
         \put(0,0){\includegraphics{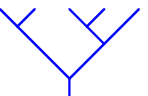}}
      \put(0,2.7){\sn{3}} \put(1.0,2.7){\sn{4}} \put(2.0,2.7){\sn{1}} \put(3.0,2.7){\sn{2}}
      \end{pspicture}}
   \put(4.5,3.7){%
      \begin{pspicture}(4,3.5)
         \put(0,0){\includegraphics{figures/3412.d.eps}}
      \put(0,2.7){\sn{2}} \put(1.0,2.7){\sn{4}} \put(2.0,2.7){\sn{1}} \put(3.0,2.7){\sn{3}}
      \end{pspicture}}
   \put(0,1.5){%
      \begin{pspicture}(4,3.5)
         \put(0,0){\includegraphics{figures/3412.d.eps}}
      \put(0,2.7){\sn{1}} \put(1.0,2.7){\sn{4}} \put(2.0,2.7){\sn{2}} \put(3.0,2.7){\sn{3}}
      \end{pspicture}}

\end{pspicture}
\qquad
\begin{pspicture}(3,20)
   \thicklines
   \put(0,0){\vector(1,0){3.0}}\put(1,0.4){$\tau$}

   \put(0,18.6){\Color{1 0 1 .3}{\vector(1,0){3.0}}}
   \put(0,14.9){\Color{1 0 1 .3}{\vector(1,0){3.0}}}
   \put(0,11.6){\Color{1 0 1 .3}{\vector(1,0){3.0}}}
   \put(0, 5.2){\Color{1 0 1 .3}{\vector(1,0){3.0}}}
\end{pspicture}
\qquad
\begin{pspicture}(6.5,20)
   \thicklines
   \put(0,0){\small binary trees $\y_\bb$}

   \put(3,18.1){%
         \includegraphics{figures/1.d.eps}}

   \put(2.5,14.0){\includegraphics{figures/21.d.eps}}
   \put(2.5,10.7){\includegraphics{figures/12.d.eps}}

   \put(1.5,3.8){%
      \includegraphics{figures/3412.d.eps}}

\end{pspicture}
\qquad
\begin{pspicture}(3,20)
   \thicklines
   \put(0,0){\vector(1,0){3.0}}\put(1,0.4){$\kappa$}

   \put(0,18.6){\Color{1 0 1 .3}{\vector(1,0){3.0}}}
   \put(0,13.3){\Color{1 0 1 .3}{\vector(1,0){3.0}}}
   \put(0, 5.2){\Color{1 0 1 .3}{\vector(1,0){3.0}}}
\end{pspicture}
\qquad
\begin{pspicture}(6.5,20)
   \thicklines
   \put(0,0){\small left combs $\c_\bb$}

   \put(2,18.1){\includegraphics{figures/1.d.eps}}

   \put(1.5,12.4){\includegraphics{figures/21.d.eps}}

   \put(0.5,3.8){\includegraphics{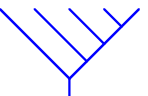}}

\end{pspicture}
\]
\caption{Maps between binary trees.}
\label{fig: many trees}
\end{figure}

%

\demph{Splitting} an ordered tree $w$ along the path from a leaf to the root yields an ordered
forest (where the nodes in the forest are totally ordered) or a pair of ordered trees,
\[
   \begin{picture}(52,49)
   \put(0,0){\includegraphics{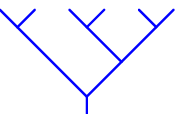}}
   \put(0,31){\sn{2}} \put(10,31){\sn{5}} \put(20,31){\sn{1}}
   \put(30,31){\sn{4}}\put(40,31){\sn{3}}
   \put(30,49){\vector(0,-1){15}}
  \end{picture}
   \ \raisebox{12pt}{\large$\leadsto$}\
  \begin{picture}(52,49)
   \put(0,0){\includegraphics{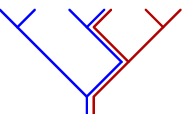}}
   \put(0,31){\sn{2}} \put(10,31){\sn{5}} \put(20,31){\sn{1}}
   \put(32,31){\sn{4}}\put(42,31){\sn{3}}
   \put(31,49){\vector(0,-1){15}}
  \end{picture}
   \ \raisebox{12pt}{$\xrightarrow{\ \curlyvee\ }$}\quad
 \raisebox{7.5pt}{$\displaystyle
   \raisebox{8pt}{$\biggl(\;$}
   \begin{picture}(32,27)
    \put(0,0){\includegraphics{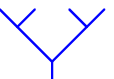}}
    \put(0,21){\sn{2}} \put(10,21){\sn{5}} \put(20,21){\sn{1}}
   \end{picture}
     ,\
   \begin{picture}(22,23)
    \put(0,0){\includegraphics{figures/21.d.eps}}
    \put(0,16){\sn{4}} \put(10,16){\sn{3}}
   \end{picture}
   \raisebox{8pt}{$\;\biggr)$}
   \quad \raisebox{5.5pt}{\mbox{or}}\quad
   \raisebox{8pt}{$\biggl(\;$}
   \begin{picture}(32,27)
    \put(0,0){\includegraphics{figures/231.d.eps}}
    \put(0,21){\sn{2}} \put(10,21){\sn{3}} \put(20,21){\sn{1}}
   \end{picture}
     ,\
   \begin{picture}(22,23)
    \put(0,0){\includegraphics{figures/21.d.eps}}
    \put(0,16){\sn{2}} \put(10,16){\sn{1}}
   \end{picture}
   \raisebox{8pt}{$\;\biggr)$}.
    $}
\]
Write $w\psplit(w_0,w_1)$ when the ordered forest $(w_0,w_1)$
(or pair of ordered trees) is obtained by splitting $w$.
(Context will determine how to interpret the result.)

We may \demph{graft} an ordered forest $\vec w = (w_0,\dotsc,w_n)$ onto an ordered tree
$v\in\s_n$, obtaining the tree $\vec w/v$ as follows.
First increase each label of $v$ so that its nodes are greater than the nodes of $\vec w$, and
then graft tree $w_i$ onto the $i^{\mathrm{th}}$ leaf of $v$.
For example,
\begin{align*}
\hbox{if } (\vec w, v)  &\ =\
  \raisebox{0pt}{$\displaystyle
   \raisebox{8pt}{$\Biggl( \biggl(\;$}
       \begin{picture}(22,23)
          \put(0,0){\includegraphics{figures/21.d.eps}}
          \put(0,16){\sn{3}} \put(10,16){\sn{2}}
       \end{picture}
         , \
         \includegraphics{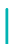}
        \ ,
       \begin{picture}(32,28)
          \put(0,0){\includegraphics{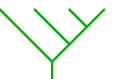}}
              \put(0,21){\sn{7}} \put(10,21){\sn{5}}\put(20,21){\sn{1}}
       \end{picture}
         , \
       \begin{picture}(12,18)
           \put(0,0){\includegraphics{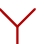}}
           \put(0,11){\sn{6}}
       \end{picture}
        \ , \
       \begin{picture}(12,18)
           \put(0,0){\includegraphics{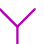}}
           \put(0,11){\sn{4}}
       \end{picture}
   \raisebox{8pt}{$\;\biggr)$} ,
       \begin{picture}(42,35)(-2,0)
              \put(0,0){\includegraphics{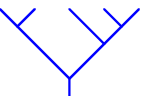}}
              \put(0,27){\sn{1}} \put(10,27){\sn{4}} \put(20,27){\sn{3}} \put(30,27){\sn{2}}
            \end{picture}
   \raisebox{8pt}{$\;\Biggr)$},
   $}
\\[2ex]
\raisebox{30pt}{then $\vec w/v$} & \ \  \raisebox{30pt}{$=$} \ \
  \begin{picture}(125,88)
    \put(0,0){\includegraphics{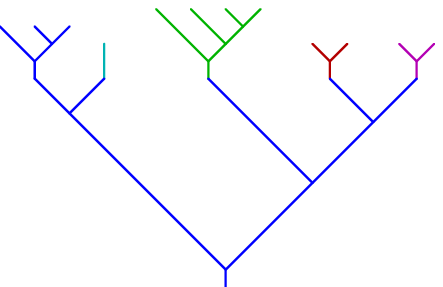}}
    \put(0,76){\sn{3}} \put(10,76){\sn{2}} \put(15,61){\sn{8}} \put(36,61){\sn{11}}
    \put(45,81){\sn{7}} \put(55,81){\sn{5}} \put(65,81){\sn{1}} \put(71,61){\sn{10}}
    \put(90,71){\sn{6}} \put(103,61){\sn{9}} \put(115,71){\sn{4}}
  \end{picture}
  \quad\raisebox{30pt}{$=$}\quad
  \begin{picture}(120,73)
    \put(0,0){\includegraphics{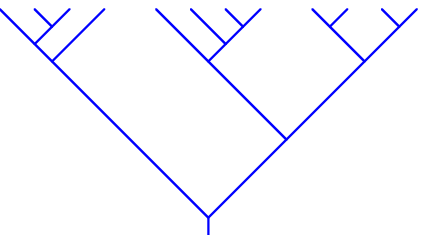}}
    \put(0,66){\sn{3}} \put(10,66){\sn{2}} \put(20,66){\sn{8}} \put(30,66){\sn{11}}
    \put(45,66){\sn{7}} \put(55,66){\sn{5}} \put(65,66){\sn{1}} \put(75,66){\sn{10}}
    \put(90,66){\sn{6}} \put(101,66){\sn{9}} \put(110,66){\sn{4}}
  \end{picture}
\raisebox{30pt}{.}
\end{align*}

The notions of splitting and grafting make sense for trees in $\y_\bb$ (simply forget the
labels on the nodes).
They also work for $\c_\bb$, if after grafting a forest of combs
onto the leaves of a comb, $\kappa$ is applied to the resulting planar binary tree to get
a new comb.

\subsubsection{Three cofree Hopf algebras}\label{sec: cofree examples}

Let $\ssym := \bigoplus_{n\geq0} \ssym_n$ be the graded vector space whose $n^{\mathrm{th}}$
graded piece has basis $\{F_w \mid w \in \s_n\}$.
Define $\ysym$ and $\csym$ similarly.
The set maps $\tau$ and $\kappa$ induce vector space maps $\btau$ and $\bkappa$,
$\btau(F_w) = F_{\tau(w)}$ and $\bkappa(F_t)=F_{\kappa(t)}$.
Fix $\mathfrak X \in \{\s, \y, \c\}$.
For $w\in \mathfrak X_\bb$ and $v\in \mathfrak X_n$, set
 \begin{align*} 
    F_w \cdot F_v\ &:=\ \sum_{w\psplit(w_0,\dotsc,w_n)} F_{(w_0,\dotsc,w_n)/v} \,,
  \intertext{the sum over all ordered forests obtained by splitting $w$ at a multiset of $n$
  leaves, and }
    \Delta(F_w) \ &:= \ \sum_{w\psplit(w_0,w_1)} F_{w_0} \otimes F_{w_1} \,,
 \end{align*}
the sum over pairs of trees obtained by splitting $w$ at one leaf.
The counit $\varepsilon$ is projection onto the $0^\mathrm{th}$ graded piece, which is spanned by the unit element $1=F_{\includegraphics{figures/0.eps}}$ for the multiplication.

\begin{proposition}\label{thm: Hopf examples}
 For $(\Delta,\cdot,\varepsilon)$ above,
 $\ssym$ is the Malvenuto--Reutenauer Hopf algebra of permutations,
 $\ysym$ is the Loday--Ronco Hopf algebra of planar binary trees, and
 $\csym$ is the divided power Hopf algebra.
 Moreover, $\ssym\xrightarrow{\,\btau\,}\ysym$ and $\ysym\xrightarrow{\,\bkappa\,}\csym$ are
 surjective Hopf algebra maps. \hfill \qed
\end{proposition}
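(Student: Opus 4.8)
The plan is to treat the two assertions separately: the identification of the three coalgebras with the named Hopf algebras, and the claim that $\btau$ and $\bkappa$ are surjective Hopf algebra maps. Throughout I would lean on the fact that each of $\ssym$, $\ysym$, $\csym$ is a \emph{connected} graded bialgebra, so that Takeuchi's lemma (recalled above) supplies an antipode for free and a Hopf algebra map is the same thing as a bialgebra map. Thus the real work is to verify the bialgebra axioms and then to match structures.

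First I would establish the bialgebra structure uniformly for $\mathfrak X \in \{\s,\y,\c\}$. Gradedness and connectedness are immediate from the definitions, so it remains to check associativity of the product, coassociativity of $\Delta$, and the compatibility $\Delta(F_w \cdot F_v) = \Delta(F_w)\cdot\Delta(F_v)$. Each is a combinatorial identity about splitting and grafting: associativity reflects that grafting a forest onto $v$ and regrafting is independent of the order of operations; coassociativity reflects that splitting at two leaves can be done in either order; and compatibility follows by matching a split of a grafted tree $\vec w/v$ against the splits of $v$ together with splits of the $w_i$. With the bialgebra structure in hand, I would identify $\ssym$ and $\ysym$ with the Malvenuto--Reutenauer and Loday--Ronco algebras by checking that, under the bijections of ordered trees with permutations and of binary trees with themselves, the splitting/grafting product and coproduct coincide with the classical ones (this is where I would invoke \cite{MalReu:1995,LodRon:1998,AguSot:2005,AguSot:2006}). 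For $\csym$ the identification is a direct computation of structure constants: since $|\c_n|=1$, splitting $c_n$ at its $n{+}1$ leaves yields the pairs $(c_i,c_{n-i})$, giving $\Delta(F_{c_n}) = \sum_{i=0}^{n} F_{c_i}\otimes F_{c_{n-i}}$, while splitting $c_m$ at a multiset of $n$ leaves and recombing produces $\binom{m+n}{n}$ copies of $c_{m+n}$, so that $F_{c_m}\cdot F_{c_n} = \binom{m+n}{n}\,F_{c_{m+n}}$; these are precisely the divided power coproduct and product.

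For the maps, surjectivity of $\btau$ and $\bkappa$ is inherited from surjectivity of the set maps $\tau$ and $\kappa$, and preservation of unit and counit is clear from degrees. The heart of the matter is that $\tau$ and $\kappa$ intertwine splitting and grafting. For $\tau$ this is transparent: forgetting the node ordering changes neither the shape of a tree nor its leaves, so the leaf-splits of $w$ map bijectively to those of $\tau(w)$, and grafting commutes with forgetting labels, i.e.\ $\tau(\vec w/v) = (\tau\vec w)/\tau(v)$; a term-by-term comparison then gives both $\Delta\,\btau = (\btau\otimes\btau)\,\Delta$ and $\btau(F_w\cdot F_v) = \btau(F_w)\cdot\btau(F_v)$. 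For $\kappa$ the coproduct compatibility is equally clean once one notes that splitting at the $i$-th leaf partitions the $n$ nodes into a left piece of size $i$ and a right piece of size $n-i$; applying $\kappa$ sends these to $c_i$ and $c_{n-i}$, reproducing $\Delta(F_{c_n})$ exactly.

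The step I expect to require the most care is the compatibility of $\bkappa$ with the product, because $\kappa$ drastically alters tree shape and the comb product is defined only after a \emph{recombing} step. Here I would verify that grafting a forest onto a comb and then applying $\kappa$ is well-defined and associative, and that the outcome depends only on the numbers of nodes involved; the key observation making this work is that a comb is determined by its number of nodes, so that the count of multiset splittings of $c_m$ at $n$ leaves (which depends only on the $m{+}1$ leaves of $c_m$, hence is preserved by $\kappa$) matches $\binom{m+n}{n}$ on both sides. Granting this, $\bkappa(F_w\cdot F_v) = \bkappa(F_w)\cdot\bkappa(F_v)$ follows, completing the proof that both maps are surjective morphisms of Hopf algebras.
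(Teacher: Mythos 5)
Your proposal is correct and follows the same route the paper takes, which is to defer the $\ssym$ and $\ysym$ identifications to \cite{AguSot:2005,AguSot:2006} and declare the $\csym$ part a straightforward check; your sketch simply fills in those standard arguments. In particular, your structure-constant computation for $\csym$ (the $\binom{m+n}{n}$ count of multiset splittings of the $m{+}1$ leaves of $c_m$, and the $n{+}1$ single-leaf splittings giving $\Delta(F_{c_n})=\sum_i F_{c_i}\otimes F_{c_{n-i}}$) is exactly the verification the paper leaves to the reader, and your use of Takeuchi's lemma to reduce Hopf algebra maps to bialgebra maps is consistent with the paper's setup.
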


The part of the proposition involving $\ssym$ and  $\ysym$ is found in~\cite{AguSot:2005,AguSot:2006};
the part involving $\csym$ is straightforward and we leave it to the reader.

\begin{remark}
  Typically~\cite[Example~5.6.8]{Mont:1993}, the divided power Hopf algebra is defined to be
 $\K[x] := \mathrm{span}\{x^{(n)} \mid n\geq0\}$,
 with basis vectors $x^{(n)}$ satisfying
  $x^{(m)}\cdot x^{(n)} = \binom{m+n}{n} x^{(m+n)}$,
  $1=x^{(0)}$, $\Delta(x^{(n)}) = \sum_{i+j=n} x^{(i)} \otimes x^{(j)}$,
 and $\varepsilon(x^{(n)})=0$ for $n>0$.
 An  isomorphism between $\K[x]$ and $\csym$ is given by
  $x^{(n)}\mapsto F_{c_n}$, where $c_n$ is the
 unique comb in $\c_n$.
\end{remark}

The following result is important for what follows.

\begin{proposition}\label{thm: cofree examples}
 The Hopf algebras $\ssym$, $\ysym$, and $\csym$ are cofreely cogenerated by their primitive
 elements. \qed
\end{proposition}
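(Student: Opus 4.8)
The plan is to prove, for each $\mathfrak X\in\{\s,\y,\c\}$, that the canonical map from the coalgebra onto the cofree coalgebra on its primitives is an isomorphism. Write $C$ for one of $\ssym,\ysym,\csym$ and $P=P(C)$ for its space of primitives. Since $C$ is graded and connected, $P$ is a graded subspace, so I can fix a degree-preserving linear projection $\pi\colon C\to P$ that vanishes on $C_0=\K$. By the universal property of the cofree (deconcatenation) coalgebra, $\pi$ lifts to a unique coalgebra morphism
\[
   \Phi\colon C\longrightarrow\Cofree(P),\qquad
   \Phi(c)=\sum_{k\ge0}\pi^{\otimes k}\bigl(\bar\Delta^{(k-1)}(c)\bigr),
\]
where $\bar\Delta$ is the reduced coproduct. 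It suffices to show $\Phi$ is a graded isomorphism. As any coalgebra map, $\Phi$ carries the coradical filtration $C_{(0)}\subset C_{(1)}\subset\dotsb$ into the tensor-degree filtration of $\Cofree(P)$ (whose $k$th piece is $\bigoplus_{j\le k}P^{\otimes j}$), so the task reduces to checking that the induced map $C_{(k)}/C_{(k-1)}\to P^{\otimes k}$ is an isomorphism for every $k$. Injectivity here is built into the definition of the coradical filtration, so the real point will be surjectivity.

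The reduced iterated coproduct has a transparent meaning in this setting: $\bar\Delta^{(k-1)}(F_w)$ is the sum of $F_{w_0}\otimes\dotsb\otimes F_{w_{k-1}}$ over all ways of splitting $w$ at an ordered choice of interior leaves into $k$ nonempty parts, so iterating $\Delta$ is deconcatenation of $w$ along nested leaf-to-root cuts, and the filtration degree records the maximal number of such cuts. The content of cofreeness is then the surjectivity $\bar\Delta^{(k-1)}(C_{(k)})=P^{\otimes k}$. For $\ssym$ and $\ysym$ this is precisely the cofreeness established in~\cite{AguSot:2005} and~\cite{AguSot:2006}: one passes to a new basis, triangularly related to $\{F_w\}$, in which the coproduct becomes literal deconcatenation; its degree-one elements then furnish a basis of $P$ while its higher monomials account for all of $C$. (Note that no single $F_w$ of degree $\ge2$ is primitive, since such a tree always admits an interior split, so the change of basis is essential.)

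For $\csym$ the $F$-basis already realizes this directly. By Proposition~\ref{thm: Hopf examples} and the Remark following it, $\Delta(F_{c_n})=\sum_{i+j=n}F_{c_i}\otimes F_{c_j}$, which is exactly the deconcatenation of the word $c_1\backslash\dotsb\backslash c_1$ with $n$ letters. Hence $P$ is one-dimensional, spanned by $F_{c_1}$, and $\csym\cong\Cofree(\K F_{c_1})$ on the nose; this is the divided power coalgebra, matching the identification $x^{(n)}\mapsto F_{c_n}$. As a consistency check across the three cases, the surjections $\ssym\xrightarrow{\,\btau\,}\ysym\xrightarrow{\,\bkappa\,}\csym$ of Proposition~\ref{thm: Hopf examples} are coalgebra maps, hence send primitives to primitives and intertwine the three instances of $\Phi$.

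The main obstacle is the surjectivity statement for $\ssym$ and $\ysym$, equivalently the construction of the deconcatenation basis via a unique factorization of ordered and binary trees and the verification that the $F$-coproduct becomes deconcatenation there. This is where the combinatorics of the node poset and of leaf-to-root splitting genuinely enter, and it is the one step I would import wholesale from~\cite{AguSot:2005,AguSot:2006} rather than reprove. Everything else—connectedness and gradedness, the existence of $\pi$ and $\Phi$, the reduction to the associated graded, and the entire $\csym$ case—is formal.
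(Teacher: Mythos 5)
Your argument is correct and follows the same route as the paper: the $\csym$ case is handled directly from the divided-power coproduct, while the substantive step for $\ssym$ and $\ysym$ --- the change to a basis (the monomial basis, obtained by M\"obius inversion) in which the coproduct is literal deconcatenation --- is imported from \cite{AguSot:2005} and \cite{AguSot:2006}, exactly as the paper does. The scaffolding you add (the universal map $\Phi$ to $\Cofree(P)$ and the reduction to the associated graded of the coradical filtration) is sound but not needed beyond what the citation already provides.
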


The result for $\csym$ is easy.
Proposition~\ref{thm: cofree examples} is proven for $\ssym$ and $\ysym$ in~\cite{AguSot:2005}
and \cite{AguSot:2006} by performing a change of basis---from the \demph{fundamental basis}
$F_w$ to the \demph{monomial basis} $M_w$---by means of M\"obius inversion in a poset structure
placed on $\s_\bb$ and $\y_\bb$.
We revisit this in Section \ref{sec: coalgebra psym}.

\newpage
\section{Cofree Compositions of Coalgebras}\label{sec: cccc}

\subsection{Cofree composition of coalgebras}\label{sec: cccc main results}

Let $\calC$ and $\calD$ be graded coalgebras.
We form a new coalgebra $\calE=\calD\circ\calC$ on the vector space
 \begin{gather}\label{eq: E_(n)}
  \calD\circ\calC \ := \ \bigoplus_{n\geq0} \calD_n \otimes \calC^{\otimes(n+1)} \,.
 \end{gather}
We write $\calE=\bigoplus_{n\geq0}\calE_{\Ddeg{n}}$, where
$\calE_{\Ddeg{n}}= \calD_n \otimes \calC^{\otimes(n+1)}$.
This gives a coarse coalgebra grading of $\calE$ by \demph{$\calD$-degree}.
There is a finer grading of $\calE$ by \demph{total degree}, in which a decomposable
tensor  $c_0\otimes \dotsb\otimes c_n \otimes d$ (with $d\in\calD_n$)
has total degree $|c_0|+\dotsb+|c_n|+|d|$.
Write $\calE_n$ for the linear span of elements of total degree $n$.
%
%

%
%

\begin{example}\label{ex: painted}
 This composition is motivated by a grafting construction on trees.  Let
 $d\times (c_0,\dotsc,c_n) \in {\red\y_n} \times \bigl({\blue\y_\bb}^{{n+1}}\bigr)$.
 Define $\circ$ by attaching the forest $(c_0,\dots,c_n)$ to the leaves of $d$
 while remembering $d$,
 \[
   \raisebox{0pt}{$\displaystyle
      \includegraphics{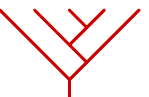}
\raisebox{2pt}{\,\ $\times$\, }
    \raisebox{8pt}{$\biggl(\;$}
      \includegraphics{figures/12.d.eps}
     , \
      \includegraphics{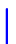}
      \ ,
      \includegraphics{figures/231.d.eps}
     , \
      \includegraphics{figures/1.d.eps}
         \ , \
      \includegraphics{figures/0.d.eps}
   \raisebox{8pt}{$\;\biggr)$}
   $} \raisebox{3pt}{\ \ \ $\stackrel{\textstyle\circ}{\longmapsto}$\ }
   \raisebox{-18pt}{\includegraphics{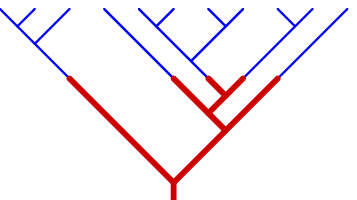}} .
 \]
 This is a new type of tree (\demph{painted trees} in Section \ref{sec: painted}).
 Applying this construction to the indices of basis elements of $\calC$ and $\calD$ and
 extending by multilinearity gives $\calD \circ \calC$.

\end{example}

Motivated by this example, we represent an decomposable tensor in
$\calD\circ\calC$ as
\[
    \composeinline{c_0}{c_n}{d} \qquad\hbox{or}\qquad \compose{c_0}{c_n}{d}
\]
to compactify notation.

\subsubsection{The coalgebra $\calD\circ\calC$}
We define the \demph{compositional coproduct} $\Delta$ for $\calD\circ\calC$ on indecomposable
tensors and extend multilinearly: if $|d|=n$, put
 \begin{equation}\label{eq: delta}
    \Delta\left(\compose{c_0}{c_n}{d}\right)\ =\
      \sum_{i=0}^n \sum_{\substack{(d) \\ |d'|=i}}
    \sum_{(c_i)} \rcompose{c_0}{c_{i-1}}{c'_i}{d'} \otimes \lcompose{c''_i}{c_{i+1}}{c_n}{d''} \,,
 \end{equation}
where the coproducts in $\calC$ and $\calD$ are expressed using Sweedler notation.

The \demph{counit} $\varepsilon:\calD\circ\calC \to \K$ is given by
$\varepsilon(\composeinline{c_0}{c_n}{d})=
  \varepsilon_\calD(d) \cdot \prod_j \varepsilon_\calC(c_j) $.
Hence, it is zero off of $\calD_0 \otimes \calC_0$.

\begin{remark}
 The reader may check that for the painted trees of Example~\ref{ex: painted},
 if $c_0,\dotsc,c_n$ and $d$ are elements of the $F$-basis of $\ysym$, then 
 ${\composeinline{c_0}{c_n}{d}}$ represents a painted tree $t$ and 
 $\Delta ({\composeinline{c_0}{c_n}{d}})$ is the sum over all splittings 
 $t\psplit(t',t'')$ of $t$ into a pair of painted trees.
\end{remark}

\begin{theorem}
  $(\calD\circ\calC,\Delta,\varepsilon)$ is  a coalgebra.
  This composition is functorial, if $\varphi\colon\calC\to\calC'$ and
  $\psi\colon\calD\to\calD'$ are morphisms of graded coalgebras, then
\[
    \compose{c_0}{c_n}{d}\ \longmapsto\
    \compose{\varphi(c_0)}{\varphi(c_n)}{\psi(d)}
\]
  defines a morphism of graded coalgebras
  $\varphi\circ\psi\colon \calD\circ\calC\to \calD'\circ\calC'$.
\end{theorem}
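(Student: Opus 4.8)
The plan is to establish the three coalgebra axioms for $(\calD\circ\calC,\Delta,\varepsilon)$ --- the two counit conditions and coassociativity --- and then to deduce functoriality by direct substitution. Throughout I use that $\Delta$ visibly preserves total degree, so that $\calD\circ\calC$ is graded, and that the counits of the graded coalgebras $\calC$ and $\calD$ are supported in degree $0$.

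First I would dispatch the counit conditions, which are the easiest. To compute $(\id\otimes\varepsilon)\Delta$ on $\composeinline{c_0}{c_n}{d}$, recall that $\varepsilon$ vanishes off $\calD_0\otimes\calC_0$, so the right-hand factor $\lcomposeinline{c''_i}{c_{i+1}}{c_n}{d''}$ of~\eqref{eq: delta} survives only when $d''\in\calD_0$ and every surviving decoration lies in $\calC_0$. Since $|d|=n$ and $|d'|=i$ force $|d''|=n-i$, the condition $d''\in\calD_0$ selects $i=n$, collapsing the outer sum. The counit axiom for $\calD$ then recovers $d$ from $\sum_{(d)} d'\,\varepsilon_\calD(d'')$, while the counit axiom for $\calC$ recovers the last decoration $c_n$ from $\sum_{(c_n)} c_n'\,\varepsilon_\calC(c_n'')$; together they return $\composeinline{c_0}{c_n}{d}$. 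The identity $(\varepsilon\otimes\id)\Delta=\id$ is symmetric.

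The heart of the argument is coassociativity. I would expand both $(\Delta\otimes\id)\Delta$ and $(\id\otimes\Delta)\Delta$ on $\composeinline{c_0}{c_n}{d}$ into sums of triple tensors, each term indexed by two successive ``cuts,'' where a cut records a choice of leaf of $d$ together with a splitting of the decoration at that leaf. Organizing terms by the unordered pair of leaf positions involved, I expect the two expansions to match term-by-term in two cases. When the cuts occur at distinct leaves, the induced three-fold splitting of $d$ is governed by coassociativity of $\Delta_\calD$ --- with the degree constraints on the successive $\calD$-components pinning down the intermediate degrees --- while two distinct decorations are each split once, so the two orders of cutting agree. When both cuts occur at the same leaf $i$, a single decoration $c_i$ is split twice, and here coassociativity of $\Delta_\calC$ is exactly what reconciles $(\Delta\otimes\id)\Delta(c_i)$ with $(\id\otimes\Delta)\Delta(c_i)$. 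Assembling the two cases shows the triple-tensor expansions coincide.

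The main obstacle is the bookkeeping in this coassociativity check: keeping the grading conditions on the successive $\calD$-components consistent across the two orders of application, and correctly identifying which decoration is split at each stage --- in particular recognizing that the ``same leaf'' subcase is precisely where coassociativity of $\Delta_\calC$ must enter. Once coassociativity is established, functoriality is routine. The map $\varphi\circ\psi$ preserves total degree since $\varphi$ and $\psi$ are graded. It commutes with the coproduct by substituting $\Delta_{\calD'}(\psi(d))=\sum_{(d)}\psi(d')\otimes\psi(d'')$, with $|\psi(d')|=|d'|$ because $\psi$ is graded, and $\Delta_{\calC'}(\varphi(c_i))=\sum_{(c_i)}\varphi(c_i')\otimes\varphi(c_i'')$ into~\eqref{eq: delta}: as $\Delta$ is assembled functorially from $\Delta_\calC$ and $\Delta_\calD$, the result is precisely $\bigl((\varphi\circ\psi)\otimes(\varphi\circ\psi)\bigr)\Delta$ applied to the original tensor. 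Compatibility with the counits is immediate from $\varepsilon_{\calD'}\circ\psi=\varepsilon_\calD$ and $\varepsilon_{\calC'}\circ\varphi=\varepsilon_\calC$, since $\varepsilon$ is the product of these counits.
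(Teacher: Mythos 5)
Your proposal is correct and follows essentially the same route as the paper: the counit check collapses the outer sum to a single index using that $\varepsilon_\calD$ is supported in degree zero, and coassociativity is verified by sorting the double-coproduct terms into the two cases of cuts at distinct leaves (handled by coassociativity of $\Delta_\calD$) versus repeated cuts at the same leaf (handled by coassociativity of $\Delta_\calC$), which is exactly the two-sum decomposition in the paper's computation of $(\Delta\otimes\id)\Delta$. The paper likewise treats functoriality as immediate.
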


\begin{proof}
 Let $\id$ be the identity map.
 Fix $e:=\composeinline{c_0}{c_n}{d} \in (\calD\circ\calC)_{\Ddeg{n}}$. From \eqref{eq: delta}, we have
 \begin{align*}
   (\Delta\otimes \id)\Delta (e) \ =\ &
    \sum_{i=0}^n \sum_{j=0}^{i-1} \sum_{\substack{(d),(d') \\ |d'|=i,|(d')'|=j}}
    \sum_{(c_i),(c_j)} \compose{c_0}{c'_j}{(d')'} \otimes \compose{c''_j}{c'_i}{(d')''}
    \otimes \compose{c''_i}{c_n}{d''} \\
&\mbox{\ \ }\quad + \sum_{i=0}^n \sum_{\substack{(d),(d') \\ |d'|=i,|(d')''|=0}}
    \sum_{(c_i),(c'_i)} \compose{c_0}{(c'_i)'}{(d')'} \otimes \zcompose{(c'_i)''}{(d')''}
    \otimes \compose{c''_i}{c_n}{d''} \ .
\intertext{Using coassociativity, this becomes}
  &    \sum_{i=0}^n \sum_{j=0}^{i-1} \sum_{\substack{(d) \\ |d'|=i,|d''|=j}}
    \sum_{(c_i),(c_j)} \compose{c_0}{c'_j}{d'} \otimes \compose{c''_j}{c'_i}{d''}
    \otimes \compose{c''_i}{c_n}{d'''} \\
  &\mbox{\ \ }\quad+ \sum_{i=0}^n \sum_{\substack{(d) \\ |d'|=i,|d''|=0}}
    \sum_{(c_i)} \compose{c_0}{c'_i}{d'} \otimes \zcompose{c''_i}{d''}
    \otimes \compose{c'''_i}{c_n}{d'''} \ .
\end{align*}
Simplification of $(\id\otimes\Delta)\Delta(e)$ reaches the same expression, proving
coassociativity.

For the counital condition, we have
\begin{align*}
(\varepsilon\otimes \id)\Delta (e) \ =\ &
    \sum_{i=0}^n \sum_{\substack{(d) \\ |d'|=i}}
    \sum_{(c_i)} \varepsilon\!\left(\rcompose{c_0}{c_{i-1}}{c'_i}{d'}\right)
    \lcompose{c''_i}{c_{i+1}}{c_n}{d''} \\
\ =\ &
    \sum_{\substack{(d) \\ |d'|=0}}
    \sum_{(c_0)} \varepsilon\!\left(\zcompose{c'_0}{d'}\right)
    \lcompose{c''_0}{c_{1}}{c_n}{d''} \,, \\
\intertext{since $\varepsilon_\calD(d')=0$ unless $|d'|=0$. Continuing, this becomes}
&  \sum_{\substack{(d) \\ |d'|=0}}
    \sum_{(c_0)} \lcompose{\varepsilon(c'_0)c''_0}{c_{1}}{c_n}{\varepsilon(d')d''} \ = \ e\,,
\end{align*}
by the counital conditions in $\calC$ and $\calD$.
The identity $(\id \otimes \varepsilon)\Delta(e) = e$ is similarly verified, proving the
counital condition for $\calD\circ\calC$.
Lastly, the functoriality is clear.
\end{proof}

%

\subsubsection{The cofree coalgebra $\calD\circ\calC$}
Suppose that $\calC$ and $\calD$ are graded, connected, and cofree.
Then $\calC=\Cofree(P_\calC)$, where $P_\calC\subset\calC$ is its space of primitive elements.
Likewise, $\calD=\Cofree(P_\calD)$.
As in Section~\ref{sec: cofree def}, we use ``$\backslash$''  for internal tensor products.

\begin{theorem}\label{thm: cc cofree}
 If\/ $\calC$ and $\calD$ are cofree coalgebras  then $\calD\circ\calC$ is also a cofree
 coalgebra.
 Its space of  primitive  elements is spanned by indecomposible tensors of the form
\begin{equation}\label{eq: primitives}
    \lrcompose{1}{c_1}{c_{n-1}}{1}{\delta} \qquad\hbox{ and }\qquad \zcompose{\,\gamma\,}{1} \,
\end{equation}
 where $\gamma,c_i\in\calC$ and $\delta\in\calD_n$ with $\gamma,\delta$ primitive.
\end{theorem}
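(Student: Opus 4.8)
The plan is to exhibit an explicit coalgebra isomorphism between $\calD\circ\calC$ and the cofree coalgebra $\Cofree(P)$, where $P$ is the span of the two families of tensors in \eqref{eq: primitives}. Since the previous theorem establishes that $\calD\circ\calC$ is a coalgebra, and since cofreeness is a property of the coalgebra structure alone, it suffices to produce the isomorphism. The cleanest route is to use the universal property of the cofree coalgebra: a coalgebra map $\Cofree(P)\to\calE$ is determined by a single linear map $P\to\calE$ (composed with the projection $\calE\to P$ onto coradical-degree-one part), and dually any coalgebra $\calE$ with a chosen splitting maps to $\Cofree(\pi(\calE))$. I would therefore first identify the correct primitive space, then show the induced comparison map is an isomorphism by a dimension/grading count in each total degree.

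The key computation is to verify that the elements listed in \eqref{eq: primitives} are indeed primitive, and that they exhaust the primitives. Writing $\calC=\Cofree(P_\calC)$ and $\calD=\Cofree(P_\calD)$, an element $\composeinline{c_0}{c_n}{d}$ is primitive precisely when every term of $\Delta$ in \eqref{eq: delta} lies in $\varepsilon$-trivial $\otimes\,\calE$ or $\calE\otimes\varepsilon$-trivial. Using the explicit deconcatenation coproducts of $\calC$ and $\calD$, the inner sum over $(d)$ with $|d'|=i$ forces $d$ itself to be primitive in $\calD$ (so either $d\in P_\calD$ with $|d|=n\geq1$, or $n=0$); when $n=0$ the tensor is $\zcompose{\gamma}{1}$ and primitivity reduces to primitivity of $\gamma\in\calC$; when $n\geq 1$ and $d=\delta\in P_\calD$, the splitting at $i=0$ and $i=n$ must not separate any nontrivial part, which forces the two outer slots to be the counit $1\in\calC_0$ and the interior $c_1,\dots,c_{n-1}$ to be arbitrary, giving exactly $\lrcomposeinline{1}{c_1}{c_{n-1}}{1}{\delta}$. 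This matches \eqref{eq: primitives}.

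The step I expect to be the main obstacle is proving that the comparison map $\Cofree(P)\to\calD\circ\calC$ is a bijection rather than merely a well-defined injective coalgebra map, since in positive characteristic or without connectivity this can fail. Here I would lean on the hypothesis that $\calC$ and $\calD$ are graded and connected, so that $\calE$ is connected in total degree and the coradical filtration is induced by the grading. The most efficient argument is to compute the two Poincar\'e series and check they agree: the generating function for $\dim(\calD\circ\calC)_n$ is read off from \eqref{eq: E_(n)} as a substitution of the series of $\calC$ into that of $\calD$, while the generating function for $\dim\Cofree(P)_n$ is $1/(1-P(t))$ where $P(t)$ is the series counting the primitives in \eqref{eq: primitives}. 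Matching these two expressions—using that $\calC$ and $\calD$ are each already cofree on their own primitives, so their series satisfy $C(t)=1/(1-P_\calC(t))$ and analogously for $\calD$—is a formal manipulation, and once the series agree the injective graded coalgebra map must be an isomorphism. I would present this series identity as the crux and relegate the termwise primitivity check to a short verification.
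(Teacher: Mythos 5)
Your strategy is genuinely different from the paper's. The paper constructs an explicit map $\varphi\colon\calD\circ\calC\to\Cofree(P)$ by recursively peeling the tensor factors of $c_0$, $c_n$ and $d$ into ``letters'' from the span $P$ of the elements in \eqref{eq: primitives}, and then observes that the positions at which $\varphi(e)$ may be deconcatenated are in bijection with the pairs (a place to deconcatenate $d$, a place to deconcatenate the accompanying $c_i$) that index the terms of \eqref{eq: delta}; cofreeness and the description of the primitives both fall out of this single bijection. You instead propose to identify the primitive space first and then conclude via the universal property of the tensor coalgebra plus a Hilbert series count. Your series identity is correct and is a nice complement to the paper: from \eqref{eq: E_(n)} the series of $\calD\circ\calC$ in total degree is $C(t)\,D(tC(t))$, the span $P$ has series $P_\calC(t)+P_\calD(tC(t))/C(t)$, and using $C=1/(1-P_\calC)$ and $D=1/(1-P_\calD)$ one checks $1/(1-P(t))=C(t)\,D(tC(t))$ exactly; combined with the injectivity of the canonical map from a connected graded coalgebra into the tensor coalgebra on its full primitive space, this closes the argument.

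There are, however, two genuine soft spots. First and most seriously, your primitivity analysis only tests decomposable tensors $\composeinline{c_0}{c_n}{d}$, whereas a primitive element of $\calE$ is a priori a linear combination of such tensors whose coproduct tails cancel (compare the primitives $M_t$ of $\ysym$, none of whose constituent $F_s$ is primitive). As written you obtain $P\subseteq\mathrm{Prim}(\calE)$ and a list of the primitive basis words, but not $\mathrm{Prim}(\calE)\subseteq P$; the latter is indispensable, because the injectivity of your comparison map $\calE\to\Cofree(P)$ fails whenever $P$ is a proper subspace of the primitives (any primitive transverse to $P$ is killed by the induced map). Closing this requires precisely the paper's key observation: in the word bases of $\Cofree(P_\calC)$ and $\Cofree(P_\calD)$, every basis word of $\calE$ factors uniquely as a concatenation of letters from $P$, and the terms of \eqref{eq: delta} are exactly its deconcatenations, so no combination of words of length at least two can be primitive. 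In other words, the step you plan to ``relegate to a short verification'' is the combinatorial heart of the proof, not a footnote to it. Second, the Hilbert series argument tacitly assumes all graded components are finite dimensional (and that $\calC,\calD$ are connected); the paper's explicit isomorphism needs neither assumption, so your route is slightly less general unless you add these hypotheses.
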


\begin{proof}
Let $\calE = \calD\circ\calC$ and let $P_\calE$ denote the vector space spanned by the vectors
in \eqref{eq: primitives}.
We compare the compositional coproduct $\Delta$ to the deconcatenation coproduct $\Delta_{\Cofree}$ on
the space $\Cofree(P_\calE)$.
We define a vector space isomorphism $\varphi\colon \calE \to \Cofree(P_\calE)$ and check that
$\Delta_{\Cofree}\,\varphi(e) = (\varphi\otimes\varphi)\Delta(e)$ for all $e\in\calE$.

Let $e = \composeinline{c_0}{c_n}{d}$.
Define $\varphi$ recursively as follows:
\begin{itemize}
  \item If $d=1$ and $c_0 = c'_{0}\backslash c''_{0}$, put
    $\displaystyle \varphi\left(\zcompose{c_{0}}{1}\right) =
    \varphi\left(\zcompose{c'_{0}}{1}\right) \Big\backslash
        \varphi\left(\zcompose{c''_{0}}{1}\right)$.

\item If $|c_0|>0$, put $\displaystyle \varphi(e) = \varphi\left(\zcompose{c_0}{1}\right) \Big\backslash \varphi\left(\lcompose{1}{c_1}{c_n}{d}\right)$.

\item If $|c_n|>0$, put $\displaystyle \varphi(e) =  \varphi\left(\rcompose{c_0}{c_{n-1}}{1}{d}\right)\Big\backslash \varphi\left(\zcompose{c_n}{1}\right)$.

\item If $d=d'\backslash d''$ with $|d'|=i$, then put
$\displaystyle \varphi(e) = \varphi\left(\compose{c_0}{c_i}{d'}\right) \Big\backslash \varphi\left(\lcompose{1}{c_{i+1}}{c_n}{d''}\right)$.
\end{itemize}
We illustrate $\varphi$ with an example from $\calE_{\Ddeg{5}}$:
\[
\zcompose{a'\backslash a'' \treeseparator b \treeseparator c \treeseparator u'\backslash u'' \treeseparator v \treeseparator w}{d'\backslash d''}
    \ \ \stackrel{\varphi}{\longmapsto} \  \
\zcompose{a'}{1} \,\Big\backslash\, \zcompose{a''}{1} \,\Big\backslash\,
\zcompose{1 \treeseparator b \treeseparator c \treeseparator 1}{d'} \,\Big\backslash\,
\zcompose{u'}{1} \,\Big\backslash\, \zcompose{u''}{1} \,\Big\backslash\,
\zcompose{1 \treeseparator v \treeseparator 1}{d''} \,\Big\backslash\, \zcompose{w}{1} \,.
\]
Here $|d'|=3$ and all variables belong to $P_\calC \cup P_\calD$.

To see that $\varphi$ is a coalgebra map, notice that locations to deconcatenate $\varphi(e)$,
\[
   t_1 \backslash \dotsb \backslash t_N \ \longmapsto \ 
   t_1 \backslash \dotsb \backslash t_i \otimes t_{i_1} \backslash \dotsb \backslash t_N \,,
\]
are in bijection with pairs of locations: a place to deconcatenate $d$ and a place to
deconcatenate an accompanying $c_i$.
These are exactly the choices governing \eqref{eq: delta}, given that $d$ and each $c_i$ belong
to tensor powers of $P_\calD$ and $P_\calC$, respectively.
\end{proof}

\subsection{Examples of cofree compositions of coalgebras}\label{sec: cccc examples}

The graded Hopf algebras of ordered trees $\ssym$, planar trees $\ysym$, and divided
powers $\csym$ are all cofree, and so their compositions are cofree.
We have the surjective Hopf algebra maps
\[
   \ssym\ \longrightarrow\ \ysym\ \longrightarrow\ \csym
\]
giving a commutative diagram of nine cofree coalgebras
(Figure \ref{fig: diamond}), as the composition $\circ$ is functorial.
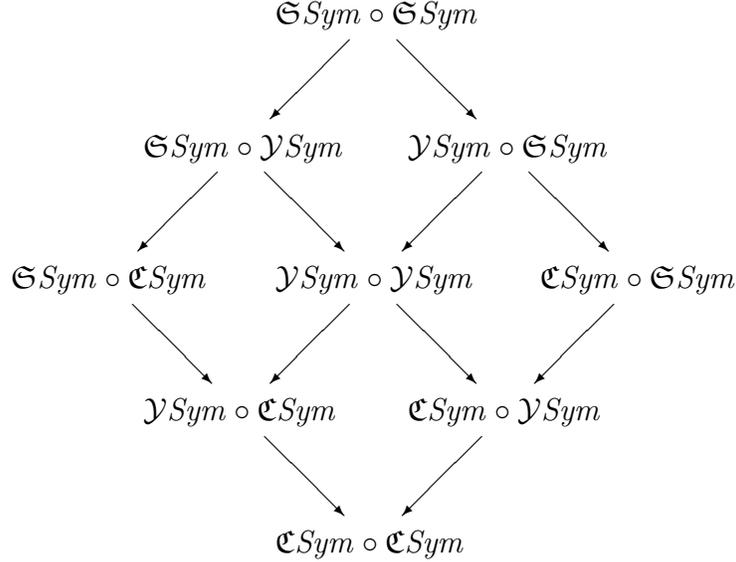
\begin{figure}[!hbt]
\centering
  \begin{picture}(270,212)
                       \put(100,200){$\ssym\circ\ssym$}
    \put(128,194){\vector(-1,-1){30}}     \put(146,194){\vector(1,-1){30}}
       \put(50,150){$\ssym\circ\ysym$}   \put(150,150){$\ysym\circ\ssym$}
    \put( 96,144){\vector(1,-1){30}}     \put(178,144){\vector(-1,-1){30}}
    \put( 78,144){\vector(-1,-1){30}}      \put(196,144){\vector(1,-1){30}}
   \put(0,100){$\ssym\circ\csym$}        \put(200,100){$\csym\circ\ssym$}
                    \put(100,100){$\ysym\circ\ysym$}
    \put( 46,94){\vector(1,-1){30}}     \put(128,94){\vector(-1,-1){30}}
    \put(146,94){\vector(1,-1){30}}     \put(228,94){\vector(-1,-1){30}}
        \put(50,50){$\ysym\circ\csym$}   \put(150,50){$\csym\circ\ysym$}
    \put( 96,44){\vector(1,-1){30}}     \put(178,44){\vector(-1,-1){30}}
                  \put(100,0){$\csym\circ\csym$}
  \end{picture}
\caption{A commutative diagram of cofree compositions of coalgebras.}\label{fig: diamond}
\end{figure}
In Section \ref{sec: hopf}, we use operads to further analyze eight of these nine (all
except $\ssym \circ \ssym$). 
We show that these eight are one-sided Hopf algebras. The algebra $\psym$ of
painted trees appears in the center of this $3\times 3$ grid. We
discuss $\psym$ further in Section \ref{sec: painted}, the algebra
$\ysym\circ\csym$ in Section~\ref{sec: weighted}, and the algebra
$\csym\circ\csym$ in Section~\ref{sec: simplices}.

\subsection{Enumeration}\label{S:enumeration}
We enumerate the graded dimension of many examples from Section~\ref{sec: cccc examples}.
Set $\calE:=\calD\circ\calC$ and let $C_n$ and $E_n$ be the dimensions of
$\calC_n$ and $\calE_n$, respectively.

\begin{theorem}
 When $\calD_n$ has a basis indexed by combs with $n$ internal nodes
 we have the recursion
 \[
   E_0\ =\ 1\,,\quad\mbox{and for \ $n>0$,}\quad
  E_n\ =\ C_n\ +\ \sum_{i=0}^{n-1}C_i E_{n-i-1}\,.
 \]
\end{theorem}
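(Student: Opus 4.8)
The plan is to reduce the statement to a single identity of formal power series in the variable tracking total degree. The key input is the hypothesis: since $|\c_m|=1$, there is a unique comb with $m$ internal nodes, so $\calD_m$ is one-dimensional, spanned by a single basis element of degree $m$. Because $\calC$ is graded, connected, and has finite-dimensional graded pieces, the composition $\calE=\calD\circ\calC=\bigoplus_{m\ge0}\calD_m\otimes\calC^{\otimes(m+1)}$ is locally finite in the total-degree grading, so the generating functions $C(x):=\sum_{k\ge0}C_kx^k$ and $E(x):=\sum_{k\ge0}E_kx^k$ are well-defined formal power series. The whole argument is a bookkeeping computation of $\dim\calE_n$ via these series.

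Next I would compute $E(x)$ summand by summand. Since total degree is additive across the tensor factors in $\calD_m\otimes\calC^{\otimes(m+1)}$, and since $\calD_m$ contributes its single basis element of degree $m$ while $\calC^{\otimes(m+1)}$ contributes the $(m{+}1)$-fold product of $C(x)$, the $\calD$-degree-$m$ summand has generating function $x^m\,C(x)^{m+1}$. Summing the resulting geometric series gives
\[
  E(x)\ =\ \sum_{m\ge0}x^m\,C(x)^{m+1}\ =\ \frac{C(x)}{1-x\,C(x)}\,,
\]
equivalently the functional equation $E(x)=C(x)+x\,C(x)\,E(x)$. Extracting the constant term yields $E_0=C_0$, which is $1$ by connectedness of $\calC$, and comparing coefficients of $x^n$ for $n>0$ gives exactly $E_n=C_n+\sum_{i=0}^{n-1}C_iE_{n-1-i}$, the claimed recursion.

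The same recursion admits a direct bijective reading that I would record as the conceptual content behind the computation. Separating $\calE_n$ into its $\calD$-degree-$0$ part (which is just $\calC_n$, accounting for the $C_n$ term) and its part of $\calD$-degree $m\ge1$, the uniqueness of combs lets one peel off the leading tensor slot together with one node of the comb: this gives a degree-preserving isomorphism $\calD_m\otimes\calC^{\otimes(m+1)}\cong\calC\otimes\bigl(\calD_{m-1}\otimes\calC^{\otimes m}\bigr)$, dropping total degree by $1$ for the removed node, and summing over $m\ge1$ identifies the remainder with $\bigoplus_{i\ge0}\calC_i\otimes\calE_{n-1-i}$. I expect no serious obstacle here; the only points requiring a word of care are the local finiteness that legitimizes the power series, and the base case $E_0=1$, which relies precisely on connectedness ($C_0=1$) together with $\dim\calD_0=1$.
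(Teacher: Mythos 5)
Your proposal is correct and, at its core, uses the same decomposition as the paper: the $C_n$ term accounts for the $\calD$-degree-$0$ summand, and peeling off $c_0$ together with the root node of the comb gives the isomorphism behind $\sum_{i}C_iE_{n-i-1}$, which is exactly the paper's argument. The generating-function packaging $E(x)=C(x)/(1-x\,C(x))$ is a harmless (and slightly more explicit) reformulation of the same bookkeeping, so I would count this as essentially the paper's proof.
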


\begin{proof}
 The first term in the expression for $E_n$ counts elements  in $\calE_n$ of the form
 $\includegraphics{figures/0.eps}\circ c$.
 Removing the root node of $d$ from ${\composeinline{c_0}{c_k}{d}}$ gives a
 pair  $\zcomposeinline{c_0}{\includegraphics{figures/0.eps}}$ and
 $\composeinline{c_1}{c_k}{d'}$ with $c_0\in\calC_i$,
 whose dimensions are enumerated by the terms $C_iE_{n-i-1}$ of the sum.
\end{proof}

For combs over a comb, $E_n=2^n$. 
For trees over a comb, $E_n$ are  the Catalan numbers.
For permutations over a comb, we have the recursion
$$
   E_0\ =\ 1\,,\quad\mbox{and for \ $n>0$,}\quad
   E_n\ =\ n! + \sum_{i=0}^{n-1}i! E_{n-i-1}\,,
$$
 which begins $1, 2, 5, 15, 54, 235,\dotsc$, and is sequence A051295 in the
 On-line Encyclopedia of Integer Sequences (OEIS)~\cite{Slo:oeis}.
 (This is the invert transform~\cite{BernSloan} of the factorial numbers.)

\begin{theorem}
When $\calD_n$ has a basis indexed by $\y_n$, then we have the  recursion
 $$
   E_0\ =\ 1\,,\quad\mbox{and for \ $n>0$,}\quad
   E_n\ =\ C_n + \sum_{i=0}^{n-1}E_i E_{n-i-1} \,.
 $$
\end{theorem}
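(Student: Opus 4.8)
The plan is to follow the proof of the preceding theorem, grading a basis of $\calE_n$ (the elements of total degree $n$) by $\calD$-degree and setting up a bijection that reflects the recursive structure of binary trees rather than of combs. The base case $E_0=1$ is immediate, since $\calC$ and $\calD$ are connected and so $\calE_0=\calD_0\otimes\calC_0=\K$. For $n>0$, the elements of $\calD$-degree zero have the form $\zcomposeinline{c_0}{1}$ with $c_0\in\calC_n$ and are enumerated by $C_n$, the first term of the recursion.

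The new ingredient, replacing the comb decomposition of the previous proof, is the classical fact that removing the root of a binary tree $d\in\y_k$ with $k\geq1$ internal nodes produces an \emph{ordered pair} $(d_L,d_R)$ of binary trees, with $d_L\in\y_a$, $d_R\in\y_b$, and $a+b=k-1$. Since a tree with $a$ internal nodes has $a+1$ leaves, this splits the accompanying forest as $(c_0,\dotsc,c_a)$ grafted on $d_L$ and $(c_{a+1},\dotsc,c_k)$ grafted on $d_R$, carrying $\composeinline{c_0}{c_k}{d}$ to the pair $\composeinline{c_0}{c_a}{d_L}$ and $\composeinline{c_{a+1}}{c_k}{d_R}$. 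Grafting the two subtrees back onto a common root and concatenating the forests is the inverse, so this is a bijection between the elements of $\calD$-degree $\geq1$ and ordered pairs of elements of $\calE$. This is exactly the structural reason the sum involves $E_iE_{n-i-1}$ rather than $C_iE_{n-i-1}$: both subtrees $d_L,d_R$ retain nontrivial $\calD$-structure and are themselves elements of $\calE$, whereas in the comb case the first factor was a bare $\calC$-element sitting on the empty tree.

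Finally I would track total degrees across this bijection. The discarded root node raises $|d|$ by one but belongs to neither subtree, so the total degrees of the two factors sum to $n-1$; writing $i$ for the total degree of the left factor $\composeinline{c_0}{c_a}{d_L}$, the right factor $\composeinline{c_{a+1}}{c_k}{d_R}$ has total degree $n-i-1$. Summing the counts $E_i\,E_{n-i-1}$ over $i=0,\dotsc,n-1$ then yields the sum in the recursion. The one point that requires care---and the only genuine difference from the comb case---is precisely this $-1$ shift: one must check that the root node is absorbed by the passage $d\mapsto(d_L,d_R)$, so that the factors' total degrees sum to $n-1$ and not $n$. Everything else is a routine bijective count.
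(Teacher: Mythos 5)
Your proof is correct and takes essentially the same approach as the paper: the paper's (much terser) argument likewise counts the $\calD$-degree-zero elements by $C_n$ and obtains the convolution term by removing the root node of $d$, noting that each of the two resulting subtrees carries its forest and is again an element of $\calE$. Your added bookkeeping of the leaf/forest split and the $-1$ degree shift from the discarded root is exactly the content the paper leaves implicit.
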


\begin{proof}
 Again, the first term in the expression for $E_n$ is the number of basis
 elements of $\calC_n,$ since each of these trees is grafted on to
 the unit element of $\calD$.
 The sum accounts for the possible pairs of trees obtained from removing root nodes in
 $\calD$.
 In this case, each subtree from the root is another tree in $\calE$.
\end{proof}

For example, combs over a tree are enumerated by the binary
transform of the Catalan numbers~\cite{forcey2}.
Trees over a tree are enumerated by the Catalan transform of the
Catalan numbers~\cite{forcey1}.
Permutations over a tree are enumerated by the recursion
$$
   E_0\ =\ 1\,,\quad\mbox{and for \ $n>0$,}\quad
   E_n\ =\ n! + \sum_{i=0}^{n-1}E_i E_{n-i-1}\,,
$$
which begins $1, 2, 6, 22, 92, 428,\dotsc$  and is not a recognized
sequence in the OEIS~\cite{Slo:oeis}.

For $\calE = \ssym \circ \calC$, we do not have a recursion, but do have a
formula from direct inspection of the possible trees $\composeinline{c_0}{c_k}{d}$ with
$|d|=k$ (since $|\s_k| = k!$)
\[
   E_n\ =\ \sum_{k=0}^{n} \,k! \! \sum_{(\gamma_0,\dotsc,\gamma_k)} C_{\gamma_0} \dotsb C_{\gamma_k}\,,
\]
the sum over all weak compositions $\gamma = (\gamma_0,\dotsc,\gamma_k)$ of $n{-}k$ into
$k{+}1$ parts ($\gamma_i \geq0$).  
Since the number of such weak compositions is 
$\binom{(n-k)+(k+1)-1}{(k+1)-1}=\binom{n}{k}$, when $\calC=\csym$ so that $C_n=1$, this
formula becomes
\[
   E_n\ =\ \sum_{k=0}^n k! \tbinom{n}{k}\ =\ 
   \sum_{k=0}^n n!/k!\,,
\]
which is sequence A000522 in the OEIS~\cite{Slo:oeis}.	

\section{Composition of Coalgebras and Hopf Modules}\label{sec: hopf}

We give conditions ensuring that a composition of coalgebras is a
one-sided Hopf algebra, interpret these in the language of operads,
and then investigate which compositions of Section~\ref{sec: cccc examples} are one-sided Hopf
algebras.

\subsection{Module coalgebras}\label{sec: covering}

Let $\calD$ be a connected graded Hopf algebra with product $m_{\calD}$, coproduct
$\Delta_{\calD}$, and unit element $1_{\calD}$.

 A map  $f: \calE \to \calD$ of graded coalgebras is a
 \demph{connection} on $\calD$ if $\calE$ is a $\calD$--module coalgebra, $f$ is a map of
 $\calD$-module coalgebras, and $\calE$ is connected.
That is, $\calE$ is an associative (left or right) $\calD$-module whose action (denoted $\star$)
commutes with the coproducts, so that
$\Delta_{\calE}(e \star d) = \Delta_{\calE}(e) \star \Delta_{\calD}(d)$,
for  $e\in \calE$ and $d \in \calD$,
\emph{and} the coalgebra map $f$ is also a module map, so that for $e\in\calE$ and
$d\in\calD$ we have
\[
  (f\otimes f )\,\Delta_{\calE}(e)\ =\ \Delta_{\calD}\, f(e)
    \qquad\mbox{and}\qquad
  f(e\star d) \ = \ m_{\calD} \,( f(e)\otimes d)\,.
\]
We may sometimes use subscripts ($f_l$ or $f_r$) on a connection $f$ to indicate that the
action is a left- or right-module action.

\begin{theorem}\label{cover_implies_Hopf}
  If $\calE$ is a connection on $\calD$, then $\calE$ is also a
  Hopf module and a comodule algebra over $\calD$.
  It is also a one-sided Hopf algebra with one-sided unit
  $\Blue{1_{\calE}}:=f^{-1}(1_{\calD})$  and antipode.
\end{theorem}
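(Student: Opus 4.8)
The plan is to build everything from the two connection axioms. First I would make $\calE$ a right $\calD$-comodule via the coaction $\rho := (\id\otimes f)\Delta_\calE\colon\calE\to\calE\otimes\calD$. Coassociativity of $\rho$ follows from coassociativity of $\Delta_\calE$ together with the fact that $f$ is a coalgebra map, i.e. $\Delta_\calD f=(f\otimes f)\Delta_\calE$; the counital identity $(\id\otimes\varepsilon_\calD)\rho=\id$ follows from $\varepsilon_\calD f=\varepsilon_\calE$ and the counit axiom in $\calE$. To see $\calE$ is a \emph{Hopf module} I would check that $\rho$ is a morphism of right $\calD$-modules, where $\calE\otimes\calD$ carries the diagonal action $(e\otimes a)\cdot d=\sum (e\star d')\otimes (a\,d'')$. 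Expanding $\rho(e\star d)$ first with the module-coalgebra axiom $\Delta_\calE(e\star d)=\Delta_\calE(e)\star\Delta_\calD(d)$ and then applying the connection identity $f(e''\star d'')=m_\calD(f(e'')\otimes d'')$ yields $\sum (e'\star d')\otimes f(e'')\,d''$, which is exactly $\rho(e)\cdot d$. Thus $\calE$ is a right--right $\calD$-Hopf module.

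Next I would introduce the product. Using the right action I set $e_1\cdot e_2 := e_1\star f(e_2)$, a graded product since $f$ and $\star$ preserve degree. Associativity reduces to associativity of $\star$ once $f(e_2\star f(e_3))=f(e_2)f(e_3)$ is rewritten via the connection identity, giving $(e_1\cdot e_2)\cdot e_3=e_1\star\bigl(f(e_2)f(e_3)\bigr)=e_1\cdot(e_2\cdot e_3)$. The generator $1_\calE=f^{-1}(1_\calD)$ of $\calE_0$ is a one-sided unit: $e\cdot 1_\calE=e\star f(1_\calE)=e\star 1_\calD=e$, whereas $1_\calE\cdot e=1_\calE\star f(e)$ need not return $e$, which is precisely the source of the one-sidedness. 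I would then verify that $\Delta_\calE$ and $\varepsilon_\calE$ are algebra maps: the module-coalgebra axiom together with $\Delta_\calD f=(f\otimes f)\Delta_\calE$ gives $\Delta_\calE(e_1\cdot e_2)=\Delta_\calE(e_1)\cdot\Delta_\calE(e_2)$, and $\varepsilon_\calE=\varepsilon_\calD f$ with multiplicativity of $\varepsilon_\calD$ gives $\varepsilon_\calE(e_1\cdot e_2)=\varepsilon_\calE(e_1)\varepsilon_\calE(e_2)$. Hence $\calE$ is a connected graded one-sided bialgebra and $f$ is a bialgebra map. Finally, since $f(e_1\cdot e_2)=f(e_1)f(e_2)$, the coaction $\rho$ is multiplicative, so $\calE$ is a $\calD$-\emph{comodule algebra}.

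For the antipode I would run Takeuchi's argument in the one-sided setting. On $\mathrm{Hom}(\calE,\calE)$ the convolution $(\phi*\psi)(e)=\sum\phi(e')\cdot\psi(e'')$ has $\eta_\calE\varepsilon_\calE$ (with $\eta_\calE(1)=1_\calE$) as a one-sided unit, matching the one-sided unit of $\calE$. Because $\calE$ is connected and graded, for $e\in\calE_n$ with $n>0$ we have $\Delta_\calE(e)=e\otimes 1_\calE+1_\calE\otimes e+\sum e'\otimes e''$ with the middle terms of strictly lower degree, so the equation $S*\id=\eta_\calE\varepsilon_\calE$ is solved recursively by $S(1_\calE)=1_\calE$ and $S(e)=-\,1_\calE\cdot e-\sum S(e')\cdot e''$; the right-unit property $S(e)\cdot 1_\calE=S(e)$ is exactly what lets me isolate $S(e)$, and the recursion is degree-preserving. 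This produces the claimed one-sided antipode.

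The main obstacle I anticipate is the one-sidedness bookkeeping: keeping straight which side the unit lives on (a right unit under the right-action convention, a left unit after replacing $\star$ by a left action and setting $e_1\cdot e_2:=f(e_1)\star e_2$, which is the $f_l$ versus $f_r$ distinction), and consequently which of the two convolution identities $S*\id$ or $\id*S$ is the solvable one. Every other step is a direct reduction to the two connection axioms plus connectedness, so the real care lies in confirming that the recursion for $S$ is well posed and graded and that the absent unit on the other side obstructs none of the verifications above.
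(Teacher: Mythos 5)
Your proposal is correct and follows essentially the same route as the paper: the product $e_1\cdot e_2 := e_1\star f(e_2)$, the coaction $\rho=(\id\otimes f)\,\Delta_\calE$, and the degree-recursive construction of the one-sided antipode from connectedness are exactly the paper's choices. You simply verify in more detail the Hopf-module and comodule-algebra compatibilities that the paper asserts without full expansion, and you spell out the antipode recursion that the paper defers to Theorem~\ref{thm: painted is hopf}.
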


\begin{proof}
 Suppose $\calE$ is a right $\calD$-module.
 Define the product $m_{\calE}:\calE\otimes\calE \to \calE$ via the $\calD$-action:
 $m_{\calE}:=\star\circ(1\otimes f)$.
 The one-sided unit is $1_{\calE}$.
 Then $\Delta_\calE$ is an algebra map.
 Indeed, for $e,e' \in \calE$, we have
\[
  \Delta_\calE(e\cdot e') \ =\  \Delta_\calE(e\star f(e'))
  \ =\ \Delta_\calE e \star \Delta_\calD f(e')
  \ =\  \Delta_\calE e \star (f\otimes f)(\Delta_\calE e')
  \ =\  \Delta_\calE e \cdot \Delta_\calE e'\,.
\]
As usual, $\varepsilon_\calE$ is just projection onto $\calE_0$.
The unit $1_{\calE}$ is one-sided, since
\[
  e\cdot 1_{\calE}
  \ =\ e\star f(1_{\calE})
  \ =\ e\star f(f^{-1}(1_{\calD}))
  \ =\ e\star 1_{\calD}
  \ =\ e \,,
\]
but $1_\calE \cdot e = 1_\calE \star f(e)$ is not necessarily equal to $e$.
As $\calE$ is a graded bialgebra, the antipode $S$ may be defined recursively to satisfy
$m_\calE(S\otimes \id)\Delta_\calE = \varepsilon_\calE$, 
see~\ref{thm: painted is hopf}.
(If instead $\calE$ is a left $\calD$-module, then it has a left-sided unit and
right-sided antipode.) 

Define $\rho \colon\calE\to\calE\otimes\calD$ by $\rho := (1\otimes f)\, \Delta_{\calE}$. 
This gives a coaction so that $\calE$
is a Hopf module and a comodule algebra over $\calD$.
\end{proof}

\subsection{Operads and operad modules}\label{sec: operad}

Composition of coalgebras is the same product used to
define operads internal to a symmetric monoidal category~\cite[Appendix~B]{AguMah:2010}.
A \demph{monoid} in a category with a product $\bullet$ is an object $\calD$ with a
morphism $\gamma \colon \calD\bullet\calD \to \calD$ that is associative.
An \demph{operad} is a monoid in the category of graded sets with an analog of the composition
product $\circ$ defined in Section~\ref{sec: cccc main results}.

The category of connected graded coalgebras and coalgebra maps is a symmetric monoidal
category with the composition $\circ$ of coalgebras.
A \demph{graded Hopf operad} $\calD$ is a monoid in this category.
That is, $\calD$ has associative composition maps 
$\gamma\colon\calD\circ \calD \to \calD$ obeying
\[
   \Delta_\calD\gamma(a)\ =\
    (\gamma\otimes\gamma)\left(\Delta_{\calD\circ\calD}(a)\right) \
   \hbox{ \ for all \ }a\in \calD\circ\calD\,.
\]
By Theorem \ref{thm: inthopf}, $\calD$ is a Hopf algebra; this
explains our nomenclature.

A \demph{graded Hopf operad module $\calE$} is an operad module (left or right) over $\calD$
and a graded coassociative coalgebra whose module action is compatible
with its coproduct.
Write $\mu_l:\calD\circ \calE \to \calE$ and $\mu_r:\calE\circ \calD \to\calE$ for the
left and right actions, which obey, e.g., 
\[
  \Delta_\calE\mu_r(b)\ =\ (\mu_r\otimes\mu_r)\Delta_{\calE\circ\calD} b
  \qquad \mbox{for all }b\in\calE\circ\calD\,.
\]

\begin{example}
 $\ysym$ is an operad in the category of vector spaces.
 The action of $\gamma$ on $\composeinline{F_{t_0}}{F_{t_n}}{F_t}$ grafts the indexing 
 trees $t_0,\dotsc,t_n$ onto the tree $t$ and, unlike in Example \ref{ex: painted},
 forgets which nodes of the resulting tree came from $t$.
 This is  associative in the appropriate sense.
 The same action $\gamma$ makes $\ysym$ an operad in the category of connected graded
 coalgebras, and thus a graded Hopf operad.
 Finally, operads are operad modules over themselves, so $\ysym$ is also graded
 Hopf operad module.
\end{example}

\begin{remark}
 This notion differs from that of Getzler and Jones~\cite{GetJon:1}, who
 defined a Hopf operad $\calD$ to be an operad 
 where each component $\calD_n$ is a coalgebra.
\end{remark}

\begin{theorem}\label{thm: inthopf}
 A graded Hopf operad $\calD$ is also a Hopf algebra with product
\begin{gather}\label{eq: operad to algebra}
   a\cdot b\ :=\ \gamma( b \otimes \Delta^{(n)} a)
\end{gather}
 where $b\in \calD_n$ and $\Delta^{(n)}$ is the iterated coproduct from $\calD$ to
 $\calD^{\otimes(n+1)}.$
\end{theorem}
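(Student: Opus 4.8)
The plan is to check that the operation $a\cdot b:=\gamma(b\otimes\Delta^{(n)}a)$, for $b\in\calD_n$, makes $\calD$ into an associative unital algebra that is graded and for which $\Delta_\calD$ and $\varepsilon_\calD$ are algebra maps; since $\calD$ is connected and graded, Takeuchi's lemma \cite[Lemma~14]{Tak:71} then produces an antipode and upgrades the resulting bialgebra to a Hopf algebra.

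First I would settle the unit and the grading. The monoidal unit for $\circ$ is $\K$ in degree $0$, so the operadic identity is the grouplike element $1_\calD\in\calD_0=\K$, with $\Delta^{(m)}1_\calD=1_\calD^{\otimes(m+1)}$. Then $a\cdot 1_\calD=\gamma(1_\calD\otimes a)=a$ and $1_\calD\cdot a=\gamma(a\otimes 1_\calD^{\otimes(n+1)})=a$ are precisely the two operad unit axioms (grafting the identity as the outer operation, respectively onto every leaf). A degree count gives $a\cdot b\in\calD_{|a|+|b|}$, so $\cdot$ is graded, and since $\varepsilon_\calD$ is projection onto $\calD_0$ the identity $\varepsilon_\calD(a\cdot b)=\varepsilon_\calD(a)\varepsilon_\calD(b)$ is immediate.

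The cleanest compatibility is that $\Delta_\calD$ is multiplicative. Here I would invoke the Hopf operad axiom $\Delta_\calD\gamma=(\gamma\otimes\gamma)\Delta_{\calD\circ\calD}$: writing $\Delta^{(n)}a=\sum a_0\otimes\dots\otimes a_n$, we have $\Delta_\calD(a\cdot b)=(\gamma\otimes\gamma)\Delta_{\calD\circ\calD}(\compose{a_0}{a_n}{b})$, and I would expand $\Delta_{\calD\circ\calD}$ via \eqref{eq: delta}. Its terms split $b$ as $b'\otimes b''$ with $|b'|=i$ and further split the accompanying $a_i$; coassociativity repackages the tensor factors as $(\Delta^{(i)}\otimes\Delta^{(n-i)})\Delta_\calD(a)$, so that $\gamma\otimes\gamma$ returns $\sum(a_{(1)}\cdot b')\otimes(a_{(2)}\cdot b'')$. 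Summing over $i$ recovers the full coproduct of $b$ and hence $\Delta_\calD(a)\cdot\Delta_\calD(b)$.

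The remaining and, I expect, hardest step is associativity. Iterating the previous identity shows $\Delta^{(m)}(a\cdot b)$ is the componentwise product of $\Delta^{(m)}a$ and $\Delta^{(m)}b$, whence for $c\in\calD_m$ one gets $(a\cdot b)\cdot c=\sum\gamma(\compose{a_{[0]}\cdot b_{[0]}}{a_{[m]}\cdot b_{[m]}}{c})$, writing $\Delta^{(m)}a=\sum a_{[0]}\otimes\dots\otimes a_{[m]}$ and similarly for $b$. Expanding $a\cdot(b\cdot c)$ instead yields the same outer frame $\gamma(\compose{b_{[0]}}{b_{[m]}}{c})=b\cdot c$ with $\Delta^{(n+m)}a$ grafted across all of its leaves. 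Operad associativity of $\gamma$ fuses the nested compositions on the first side into this common frame, and the two-stage comultiplication of $a$ there --- first into $m+1$ pieces, then each $a_{[j]}$ into $|b_{[j]}|+1$ pieces --- collapses by coassociativity to the single $\Delta^{(n+m)}a$ on the second side. The obstacle is purely bookkeeping: one must verify that operad associativity assigns the pieces of $a$ to the leaves in the same left-to-right order on both sides, i.e. that the leaf ordering of $\gamma(\compose{b_{[0]}}{b_{[m]}}{c})$ is compatible with the coassociative regrouping. Once associativity, unitality, and the two algebra-map conditions hold, $\calD$ is a connected graded bialgebra and Takeuchi's lemma delivers the antipode.
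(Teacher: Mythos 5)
Your proposal is correct and follows essentially the same route as the paper: the unit from the operad unit axioms, multiplicativity of $\Delta_\calD$ from the Hopf operad axiom $\Delta_\calD\gamma=(\gamma\otimes\gamma)\Delta_{\calD\circ\calD}$, associativity from operad associativity of $\gamma$ together with the fact that $\Delta^{(m)}$ is an algebra map (so the componentwise product of $\Delta^{(m)}a$ and $\Delta^{(m)}b$ is $\Delta^{(m)}(a\cdot b)$), and the antipode from Takeuchi's lemma for connected graded bialgebras. The only cosmetic difference is that you verify associativity by expanding both $(a\cdot b)\cdot c$ and $a\cdot(b\cdot c)$ toward a common expression, whereas the paper runs the same chain of equalities in one direction starting from $a\cdot(b\cdot c)$.
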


\begin{remark} 
If we swap the roles of $a$ and $b$ on the right-hand side of \eqref{eq: operad to algebra},
we also obtain a Hopf algera, for $H^{\mathrm{op}}$ is a Hopf algebra whenever $H$ is
one. Our choice agrees with the description (Section \ref{sec: Hopf examples}) of products
in $\ysym$ and $\csym$.  
\end{remark}

Before we prove Theorem \ref{thm: inthopf}, we restate an old result in the language of operads.

\begin{proposition}\label{prop:tree and comb}
 The well-known Hopf algebra structures of $\ysym$ and $\csym$
 are induced by their structure as  graded Hopf operads.
\end{proposition}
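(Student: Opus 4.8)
The plan is to verify that the operadic composition map $\gamma$ of each of $\ysym$ and $\csym$, when fed through the formula \eqref{eq: operad to algebra} of Theorem \ref{thm: inthopf}, reproduces exactly the product $F_w\cdot F_v$ defined in Section \ref{sec: Hopf examples} via splitting and grafting. Since Proposition \ref{thm: Hopf examples} already establishes that those products give the Malvenuto--Reutenauer and divided-power Hopf algebras, matching the two products suffices. So the statement is really an unwinding of definitions: I would take the operad structure $\gamma$ (grafting of the indexing trees, forgetting which nodes came from the base tree, as in the Example preceding Theorem \ref{thm: inthopf}) and show that $a\cdot b=\gamma(b\otimes\Delta^{(n)}a)$ is the splitting-grafting product.

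First I would fix $b=F_v$ with $v\in\y_n$ (resp. $v\in\c_n$) and $a=F_w$, and compute $\Delta^{(n)}(F_w)$. By the definition of $\Delta$ in Section \ref{sec: cofree examples} and coassociativity, the iterated coproduct $\Delta^{(n)}(F_w)$ is the sum over all ways of splitting $w$ at a multiset of $n$ leaves into an ordered forest $(w_0,\dotsc,w_n)$, i.e. $\Delta^{(n)}(F_w)=\sum_{w\psplit(w_0,\dotsc,w_n)}F_{w_0}\otimes\dotsb\otimes F_{w_n}$. This is precisely the shape of the forest appearing in the definition of $F_w\cdot F_v$.

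Next I would apply $\gamma$ to $F_v\otimes\bigl(F_{w_0}\otimes\dotsb\otimes F_{w_n}\bigr)$. By definition of the operadic composition, $\gamma$ grafts the forest $(w_0,\dotsc,w_n)$ onto the leaves of $v$ after raising the labels of $v$ above those of the forest---this is exactly the grafting operation $\vec w/v$ of Section \ref{sec: trees}. Hence $\gamma(F_v\otimes F_{w_0}\otimes\dotsb\otimes F_{w_n})=F_{(w_0,\dotsc,w_n)/v}$, and summing over the splittings gives $\sum_{w\psplit(w_0,\dotsc,w_n)}F_{(w_0,\dotsc,w_n)/v}$, which is the definition of $F_w\cdot F_v$. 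For $\csym$ the only additional point is that after grafting combs one applies $\kappa$ to return to a comb, but this is built into the operadic $\gamma$ on $\csym$ and into the grafting convention for $\c_\bb$ recorded in Section \ref{sec: trees}, so the two again agree.

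The main obstacle, and the only step requiring care, is the bookkeeping of the labels: one must check that the relabeling convention in the operadic $\gamma$ (shift $v$'s nodes above the forest's) coincides with the relabeling convention in the grafting map $\vec w/v$, and that splitting at a \emph{multiset} of $n$ leaves (allowing repeated leaves, which corresponds to the degenerate pieces $w_i=\includegraphics{figures/0.eps}$) matches the terms of $\Delta^{(n)}$ where some tensor factors are the unit. Once the conventions are aligned, the identification is immediate, and Proposition \ref{thm: Hopf examples} closes the argument.
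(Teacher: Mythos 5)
Your unwinding of the formula \eqref{eq: operad to algebra} is correct and coincides with the final step of the paper's own proof: the iterated coproduct $\Delta^{(n)}(F_w)$ is indeed the sum of $F_{w_0}\otimes\dotsb\otimes F_{w_n}$ over splittings $w\psplit(w_0,\dotsc,w_n)$, and applying $\gamma$ (grafting, with a final application of $\kappa$ in the $\csym$ case) turns each term into $F_{(w_0,\dotsc,w_n)/v}$, recovering the product of Section \ref{sec: Hopf examples}. Your attention to the label conventions and to the degenerate factors $w_i=\includegraphics{figures/0.eps}$ is exactly the right bookkeeping.

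There is, however, one piece of the paper's proof that your proposal omits: the verification that the composition maps $\gamma$ on $\ysym$ and $\csym$ are morphisms of graded coalgebras, i.e.\ that $\Delta_{\calD}\,\gamma=(\gamma\otimes\gamma)\,\Delta_{\calD\circ\calD}$. This is precisely what makes $\ysym$ and $\csym$ \emph{graded Hopf operads} in the sense of Section \ref{sec: operad}, and it is the hypothesis under which Theorem \ref{thm: inthopf} produces a Hopf algebra; without it the proposition's claim that the known structures are ``induced by their structure as graded Hopf operads'' is only half established, since you have matched the products but not exhibited the graded Hopf operad structure itself. Note that this property is a priori stronger than the bialgebra compatibility $\Delta(a\cdot b)=\Delta a\cdot\Delta b$ that you could read off from Proposition \ref{thm: Hopf examples}, because the product only probes $\gamma$ on elements of the form $b\otimes\Delta^{(|b|)}a$, not on all of $\calD\circ\calD$. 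The missing check is short --- splitting a composite tree and then grafting the two resulting pieces via $\gamma\otimes\gamma$ gives the same answer as grafting first and then splitting, with combing inserted throughout for $\csym$ --- but it should appear explicitly in the proof.
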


\begin{proof}
 The operad structure on $\ysym$ is the operad of planar,
 rooted,  binary trees, where composition $\gamma$ is grafting.
 The operad structure on $\csym$ is the terminal operad, which has a single element in
 each component.
 Representing the single element of degree $n$ as
 a comb of $n$ leaves, the composition $\gamma$ becomes
 grafting and combing all branches of the result.

 We check that these compositions $\gamma$ are coalgebra maps.
 For $\calD =\ysym$, the coproduct $\Delta_{\calD\circ\calD}$ in the $F$-basis 
 is the sum over possible splittings of the composite trees. 
 Then splitting an element of $e\in \calD \circ \calD$ and grafting both
 resulting trees (via $\gamma\otimes\gamma$) yields the same result as first grafting 
 ($e\to \gamma(e)$), then splitting the resulting tree.
 When $\calD=\csym$, virtually the same analysis holds, with the proviso that graftings
 are always followed by combing all branches to the right. 

 Finally, we note that the product in $\ysym$ in terms of the $F$-basis is simply
 $a\cdot b = \gamma(b\otimes \Delta^{(|b|)}a)$. 
 The same holds for $\csym$, again with the proviso that $\gamma$ is grafting, followed by
 combing. 
\end{proof}

\begin{proof}[Proof of Theorem~\ref{thm: inthopf}]
 We have $\gamma(1\otimes1)=1$ and $\gamma(b\otimes 1^{\otimes|b|+1}) = b$ by construction,
 since $\calD$ is connected. 
 Thus $1=1_\calD$ is the unit in $\calD$.

 The image of $ \id\otimes\Delta^{(n)}$ lies in $\calD\circ\calD$.
  As $\gamma$ is a map of graded coalgebras, 
 $\Delta(a\cdot b) = \Delta a \cdot \Delta b$. 
 Indeed, for $b\in\calD$ homogeneous,
 \begin{align*}
   \Delta(a\cdot b) \ =\   \Delta (\gamma(b\otimes \Delta^{(|b|)}a))
   & =\ (\gamma \otimes \gamma)(\Delta_{\calD\circ\calD}(b\otimes \Delta^{(|b|)}a))\\
   & =\ (\gamma \otimes \gamma)(( \Delta b\otimes \Delta^{(|b|)}\Delta a) )
    \ =\ \Delta a \cdot \Delta b\,.
 \end{align*}
Associativity of the product follows, since for $b,c$ homogeneous elements of $\calD$,
we have
\begin{align}
\nonumber  a\cdot(b\cdot c)
  \ =\ a\cdot\gamma(c\otimes\Delta^{(|c|)} b )\
  &=\ \gamma\left(\gamma(c\otimes\Delta^{(|c|)}b )\otimes\Delta^{(|b|+|c|)}a\right)\\
  \label{eq: assoc}  
  &=\ \gamma\left(c\otimes 
    \gamma^{\otimes(|c|+1)}(\Delta^{(|c|)}b\otimes\Delta^{(|b|+|c|)}a) \right)\\
  \nonumber 
  &=\ \gamma\left(c \otimes(\Delta^{(|c|)}a\cdot\Delta^{(|c|)}b )\right)\\
  \label{eq: alg-map} 
   &=\ \gamma(c\otimes\Delta^{(|c|)}(a\cdot b) )
  \ =\ (a\cdot b)\cdot c\,.
\end{align}
 Here, \eqref{eq: assoc} is by the associativity of composition $\gamma$ in an operad, where we assume the isomorphism
 $\calD \circ (\calD \circ \calD) \cong (\calD \circ \calD) \circ \calD$.
 The step \eqref{eq: alg-map} follows as $\calD$ is a bialgebra ($\Delta^{(n)}$ is an
 algebra map since $\Delta=\Delta^{(1)}$ is one). 
\end{proof}

\begin{lemma}\label{lem:opmod}
  If $\calC$ is a graded coalgebra and $\calD$ is a graded Hopf operad, then
  $\calD \circ \calC$ is a (left) graded Hopf operad module and
  $\calC \circ \calD$ is a (right) graded Hopf operad module.
\end{lemma}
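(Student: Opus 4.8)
The plan is to realise these module structures as the \emph{free module} structures familiar from operad theory and then to observe that the additional ``Hopf'' requirement---that the action be a coalgebra map---is automatic. Throughout I use that the composition $\circ$ is associative up to natural coalgebra isomorphism; this is the monoidal structure on connected graded coalgebras already invoked in the proof of Theorem~\ref{thm: inthopf}, and it supplies coalgebra isomorphisms
\[
  \alpha\colon \calD\circ(\calD\circ\calC)\ \xrightarrow{\ \cong\ }\ (\calD\circ\calD)\circ\calC
  \qquad\text{and}\qquad
  \beta\colon (\calC\circ\calD)\circ\calD\ \xrightarrow{\ \cong\ }\ \calC\circ(\calD\circ\calD)\,.
\]

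First I would define the left and right actions as the composites
\[
  \mu_l\colon\ \calD\circ(\calD\circ\calC)\ \xrightarrow{\ \alpha\ }\ (\calD\circ\calD)\circ\calC
     \ \xrightarrow{\ \id_\calC\circ\,\gamma\ }\ \calD\circ\calC\,,
\]
\[
  \mu_r\colon\ (\calC\circ\calD)\circ\calD\ \xrightarrow{\ \beta\ }\ \calC\circ(\calD\circ\calD)
     \ \xrightarrow{\ \gamma\circ\,\id_\calC\ }\ \calC\circ\calD\,,
\]
where $\id_\calC\circ\,\gamma$ and $\gamma\circ\,\id_\calC$ are the coalgebra maps obtained from $\gamma\colon\calD\circ\calD\to\calD$ and $\id_\calC$ by functoriality of the composition (Section~\ref{sec: cccc}). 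Concretely, each action merely composes the two nested copies of $\calD$ by means of the operad multiplication $\gamma$, leaving the single copy of $\calC$ untouched; this is exactly the free left (resp.\ right) $\calD$-module structure on $\calD\circ\calC$ (resp.\ $\calC\circ\calD$).

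Next I would verify the three clauses in the definition of a graded Hopf operad module. That $\calD\circ\calC$ and $\calC\circ\calD$ are coassociative coalgebras is the content of the first theorem of Section~\ref{sec: cccc}. The operad-module axioms---associativity and unitality of $\mu_l$ and $\mu_r$---follow from the associativity of $\gamma$ (the monoid axiom for the operad $\calD$) together with the coherence of the associators $\alpha$ and $\beta$, just as associativity of the product in Theorem~\ref{thm: inthopf} followed from \eqref{eq: assoc}; unitality uses the unit axiom $\gamma(b\otimes 1^{\otimes(|b|+1)})=b$ exactly as in that proof. Finally, the coproduct-compatibility---for instance $\Delta_\calE\,\mu_r=(\mu_r\otimes\mu_r)\,\Delta_{\calE\circ\calD}$---is precisely the assertion that $\mu_l$ and $\mu_r$ are coalgebra maps. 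But each is a composite of coalgebra maps: $\alpha$ and $\beta$ are coalgebra isomorphisms, $\gamma$ is a coalgebra map because $\calD$ is a Hopf operad, and functoriality of $\circ$ turns $\gamma$ together with $\id_\calC$ into the coalgebra maps displayed above. Hence $\mu_l$ and $\mu_r$ are coalgebra maps, as required.

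The only genuine obstacle is formal rather than computational: one must know that the associators $\alpha,\beta$ for $\circ$ are honest coalgebra isomorphisms and that they satisfy the coherence (pentagon) used in the module-associativity axiom. This is exactly the statement that the connected graded coalgebras form a monoidal category under $\circ$, which the paper already takes for granted. Once that is in hand, the whole argument is a diagram chase whose every arrow is visibly a coalgebra map, so no fresh manipulation of the explicit coproduct \eqref{eq: delta} is needed.
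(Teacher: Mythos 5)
Your proof is correct, and the actions you construct are the same as the paper's: unwinding your composites $(\id_\calC\circ\gamma)\circ\alpha$ and $(\gamma\circ\id_\calC)\circ\beta$ on decomposable tensors recovers exactly the explicit formulas the paper writes for $\mu_l$ and $\mu_r$ (apply $\gamma$ to the nested copies of $\calD$, leave the $\calC$-entries untouched). The difference lies in how the verification is organized. The paper defines the actions by explicit formulas, deduces module-associativity directly from associativity of $\gamma$, and leaves the compatibilities $\Delta\mu_l=(\mu_l\otimes\mu_l)\Delta$ and $\Delta\mu_r=(\mu_r\otimes\mu_r)\Delta$ to the reader. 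You instead note that every arrow in your composites is a coalgebra map---the associators by monoidality of $\circ$, $\gamma$ by the Hopf-operad axiom, and the induced maps by the functoriality theorem of Section~\ref{sec: cccc}---so that compatibility is automatic; this is what your approach buys. Be aware, though, that it does not eliminate the computation so much as relocate it: the claim that the associator $\calD\circ(\calD\circ\calC)\cong(\calD\circ\calD)\circ\calC$ is a coalgebra isomorphism for the compositional coproduct \eqref{eq: delta} is essentially the same check the paper leaves to the reader, and neither you nor the paper carries it out. Since the paper already assumes without proof that $\circ$ is a monoidal structure on connected graded coalgebras (it invokes the associator isomorphism in the proof of Theorem~\ref{thm: inthopf}), your argument sits at the same level of rigor and is arguably the more transparent packaging.
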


\begin{proof}
  We grade $\calD \circ \calC$ and $\calC \circ \calD$ by total degree.
 An operad module of vector spaces is a sequence of vector spaces
 acted upon by the operad.
 The action $\mu_l:\calD\circ(\calD \circ \calC)\to(\calD \circ \calC)$ is given by
\[
     \mu_l\left(d\otimes \compose{c_{0_0}}{c_{i_0}}{d_0}\otimes\dots\otimes
    \compose{c_{0_n}}{c_{i_n}}{d_n}\right)\
    =\ \compose{c_{0_0}}{c_{i_n}}{\gamma(d\otimes d_0\otimes\dots\otimes d_n)}\ .
\]
 Associativity of $\gamma$ implies that this  action is associative.
The action
$\mu_r:(\calC \circ \calD)\circ\calD\to(\calC \circ \calD)$ is given by
\begin{multline*}
  \qquad
   \mu_r\left(\compose{d_{0}}{d_{m}}{c}\otimes d_{0_0}\otimes\dots \otimes d_{j_m}\right)\\
   = \
   \compose{\gamma( d_0\otimes d_{0_0}\otimes\dots\otimes d_{j_0})}%
    {\gamma(d_m\otimes d_{0_m}\otimes\dots\otimes d_{j_m})}{c}\ .
\end{multline*}
 Associativity of $\gamma$ implies that this action is associative as well. We leave the
 reader to check that $\Delta\mu_l = (\mu_l\otimes \mu_l)\Delta$ and 
 $\Delta\mu_r = (\mu_r\otimes \mu_r)\Delta$.
\end{proof}

\begin{lemma}\label{lem:coal}
 A graded Hopf operad module $\calE$ over a graded Hopf operad
 $\calD$ is also a module coalgebra for the Hopf algebra $\calD$.
\end{lemma}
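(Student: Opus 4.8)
The plan is to put a $\calD$-action on $\calE$ built from the operad-module structure in exactly the way the Hopf product on $\calD$ was built from $\gamma$ in Theorem~\ref{thm: inthopf}, and then to check the two defining properties of a module coalgebra: that the action is associative and unital, and that it is a morphism of coalgebras. Assume first that $\calE$ is a right operad module, with action $\mu_r\colon\calE\circ\calD\to\calE$; for $e\in\calE_n$ (so that $e$ carries $n{+}1$ leaves) and $d\in\calD$ put
\[
   e\star d\ :=\ \mu_r\bigl(e\otimes\Delta^{(n)}d\bigr)\,,
\]
where $\Delta^{(n)}\colon\calD\to\calD^{\otimes(n+1)}$ distributes $d$ across the $n{+}1$ leaves of $e$. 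Connectedness of $\calD$ gives $\gamma(d\otimes 1^{\otimes(|d|+1)})=d$, whence $e\star 1_\calD=\mu_r(e\otimes 1^{\otimes(n+1)})=e$, so the action is unital. When $\calE$ is instead a left operad module one sets $d\star e:=\mu_l(d\otimes\Delta^{(|d|)}e)$ using the iterated coproduct of $\calE$; the two cases are symmetric and I will treat only the right one.

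Associativity of $\star$ is the module analog of the chain \eqref{eq: assoc}--\eqref{eq: alg-map}, with $\mu_r$ in the outermost position in place of $\gamma$. Expanding $(e\star d)\star d' = \mu_r\bigl(\mu_r(e\otimes\Delta^{(n)}d)\otimes\Delta^{(n+|d|)}d'\bigr)$ and invoking associativity of the operad-module action (the isomorphism $(\calE\circ\calD)\circ\calD\cong\calE\circ(\calD\circ\calD)$) moves the outer graft inside as $\gamma$, giving $\mu_r\bigl(e\otimes\gamma^{\otimes(n+1)}(\Delta^{(n)}d\otimes\Delta^{(n+|d|)}d')\bigr)$. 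Coassociativity regroups the inner term as the componentwise product $(\Delta^{(n)}d')\cdot(\Delta^{(n)}d)$ in $\calD^{\otimes(n+1)}$, which equals $\Delta^{(n)}(d'\cdot d)$ since $\Delta^{(n)}$ is an algebra map. Thus $(e\star d)\star d'=e\star(d'\cdot d)$, which exhibits $\calE$ as an associative unital left $\calD$-module (a right module over $\calD^{\mathrm{op}}$, as forced by the convention $a\cdot b=\gamma(b\otimes\Delta^{(|b|)}a)$); dually, a left operad module yields a right $\calD$-module.

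It remains to prove the module-coalgebra identity $\Delta_\calE(e\star d)=\Delta_\calE(e)\star\Delta_\calD(d)$, the analog of $\Delta(a\cdot b)=\Delta a\cdot\Delta b$. Using that $\mu_r$ is a coalgebra map (the defining compatibility $\Delta_\calE\mu_r=(\mu_r\otimes\mu_r)\Delta_{\calE\circ\calD}$ of a graded Hopf operad module),
\[
   \Delta_\calE(e\star d)\ =\ (\mu_r\otimes\mu_r)\,\Delta_{\calE\circ\calD}\bigl(e\otimes\Delta^{(n)}d\bigr)\,.
\]
The main obstacle --- indeed the only genuine computation --- is to evaluate the compositional coproduct \eqref{eq: delta} on this ``spread'' element and to see that it factors as
\[
   \Delta_{\calE\circ\calD}\bigl(e\otimes\Delta^{(n)}d\bigr)\ =\ \sum_{(e),(d)}\bigl(e'\otimes\Delta^{(|e'|)}d'\bigr)\otimes\bigl(e''\otimes\Delta^{(|e''|)}d''\bigr)\,.
\]
Each summand of \eqref{eq: delta} splits $e$ at a leaf $i$ into $e',e''$ (with $|e'|=i$, $|e''|=n-i$) and splits the accompanying $i$-th tensorand of $\Delta^{(n)}d$; matching these two splittings is precisely the coassociativity identity $\sum_{(d)}\Delta^{(i)}(d')\otimes\Delta^{(n-i)}(d'')=(\id^{\otimes i}\otimes\Delta\otimes\id^{\otimes(n-i)})\Delta^{(n)}(d)$, which is exactly what the coproduct \eqref{eq: delta} was designed to encode. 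Granting this factorization, applying $\mu_r\otimes\mu_r$ recovers $\sum(e'\star d')\otimes(e''\star d'')=\Delta_\calE(e)\star\Delta_\calD(d)$, as desired; the left-module case is identical with $\mu_l$ replacing $\mu_r$. Hence $\calE$ is a module coalgebra over the Hopf algebra $\calD$.
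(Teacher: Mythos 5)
Your proposal is correct and follows exactly the route the paper takes: define the action by $\mu_r(e\otimes\Delta^{(|e|)}d)$ (resp.\ $\mu_l(d\otimes\Delta^{(|d|)}e)$) and rerun the proof of Theorem~\ref{thm: inthopf} with $\mu$ in place of $\gamma$ in the outermost position; the paper states this in one line, while you carry out the verification, including the side of the action ($(e\star d)\star d'=e\star(d'\cdot d)$) and the factorization of $\Delta_{\calE\circ\calD}(e\otimes\Delta^{(n)}d)$ via coassociativity.
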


\begin{proof}
 Fix $e\in \calE$ and $d\in\calD$ to be homogeneous elements.
 If $\calE$ is a right operad module
 over $\calD$ then define a left action by $d\star e :=\mu_r(e \otimes \Delta^{(|e|)} d)$.
 If $\calE$ is a left operad module over
 $\calD$ then $e\star d := \mu_l(d \otimes \Delta^{(|d|)} e)$ defines a right action.

 Checking that either case defines an associative action and a module coalgebra uses the
 same reasoning as for the proof of Theorem~\ref{thm: inthopf}, with $\mu$ replacing
 $\gamma$. 
\end{proof}

\begin{theorem}\label{thm:suffcover}
  Given a coalgebra map $\lambda \colon \calC \to \calD$ from a  connected graded coalgebra $\calC$
  to a graded Hopf operad $\calD$, the maps
  $f_r = \gamma\circ(\id\circ\lambda)\colon \calD\circ\calC\to \calD$ and
  $f_l = \gamma\circ(\lambda\circ \id)\colon \calC\circ\calD\to \calD$ give
  connections on $\calD$.
\end{theorem}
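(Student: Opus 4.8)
The plan is to verify the three defining properties of a connection for $f_r$, and then note that the argument for $f_l$ is entirely symmetric (exchanging the two factors and replacing $\mu_l$ by $\mu_r$). Recall that I must show (i) $\calD\circ\calC$ is connected, (ii) $\calD\circ\calC$ is a (right) $\calD$-module coalgebra, and (iii) $f_r$ is at once a coalgebra map and a module map. Property (i) is immediate with the total-degree grading: the only contribution to total degree $0$ in $\calD\circ\calC=\bigoplus_n\calD_n\otimes\calC^{\otimes(n+1)}$ comes from $n=0$ with a degree-zero $\calC$-factor, giving $\calD_0\otimes\calC_0=\K\otimes\K=\K$, since both $\calC$ and $\calD$ are connected. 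Property (ii) is exactly the content of Lemmas~\ref{lem:opmod} and~\ref{lem:coal}: the former makes $\calD\circ\calC$ a left graded Hopf operad module, and the latter upgrades this to a right $\calD$-module coalgebra under the action $e\star d:=\mu_l(d\otimes\Delta^{(|d|)}e)$ (in particular $\Delta_\calE(e\star d)=\Delta_\calE(e)\star\Delta_\calD(d)$ holds there).

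For (iii) the coalgebra-map half is formal. By construction $f_r=\gamma\circ(\id\circ\lambda)$, where $\id\circ\lambda\colon\calD\circ\calC\to\calD\circ\calD$ is a coalgebra map by functoriality of the composition $\circ$ (using that $\lambda$ is a coalgebra map), and $\gamma\colon\calD\circ\calD\to\calD$ is a coalgebra map because $\calD$ is a graded Hopf operad. Hence $f_r$, being a composite of coalgebra maps, is itself a coalgebra map, which is precisely the required identity $(f_r\otimes f_r)\Delta_{\calE}=\Delta_\calD f_r$.

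The remaining, and main, point is the module-map identity $f_r(e\star d)=m_\calD(f_r(e)\otimes d)$. The crux is an intertwining relation between $f_r$ and the module action,
\[
   f_r\circ\mu_l\ =\ \gamma\circ(\id_\calD\circ f_r)
   \qquad\text{as maps }\ \calD\circ(\calD\circ\calC)\to\calD\,.
\]
I would check this on a decomposable tensor $d\otimes e_0\otimes\dots\otimes e_n$ with $e_j=\composeinline{c_{0_j}}{c_{i_j}}{d_j}$: the left side grafts all of the $\lambda(c)$'s onto $\gamma(d\otimes d_0\otimes\dots\otimes d_n)$, whereas the right side first grafts the $\lambda(c)$'s of $e_j$ onto each $d_j$ and then grafts the results onto $d$; these two grafting orders agree by the associativity of the operad composition $\gamma$ (this is the same operad-module associativity invoked in Lemma~\ref{lem:opmod}). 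Granting the relation, the module-map identity falls out by chaining the definition of $\star$, the intertwining relation, the coalgebra-map property of $f_r$, and the product formula of Theorem~\ref{thm: inthopf}:
\begin{align*}
   f_r(e\star d)
   &=f_r\bigl(\mu_l(d\otimes\Delta^{(|d|)}e)\bigr)
   =\gamma\bigl(d\otimes f_r^{\otimes(|d|+1)}\Delta^{(|d|)}e\bigr)\\
   &=\gamma\bigl(d\otimes\Delta^{(|d|)}f_r(e)\bigr)
   =f_r(e)\cdot d=m_\calD(f_r(e)\otimes d)\,.
\end{align*}

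I expect the one genuine obstacle to be the bookkeeping in the intertwining relation: correctly tracking the many $\calC$- and $\calD$-indices through $\mu_l$, $\gamma$, and the iterated coproduct $\Delta^{(|d|)}$, and confirming that operad associativity really does identify the two orders of grafting. Everything else is formal. Finally, the symmetric computation, using $\mu_r$, $f_l=\gamma\circ(\lambda\circ\id)$, the right operad module structure of $\calC\circ\calD$, and the left action $d\star e:=\mu_r(e\otimes\Delta^{(|e|)}d)$, establishes that $f_l$ is a connection as well.
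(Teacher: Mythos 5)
Your proposal is correct and follows the paper's own route: cite Theorem~\ref{thm: inthopf} and Lemmas~\ref{lem:opmod} and~\ref{lem:coal} for the connected module-coalgebra structure, observe that $f_r$ (resp.\ $f_l$) is a composite of coalgebra maps via functoriality of $\circ$, and reduce the module-map property to associativity of $\gamma$. Your explicit intertwining relation $f_r\circ\mu_l=\gamma\circ(\id_\calD\circ f_r)$ and the ensuing four-step chain simply spell out what the paper compresses into the single sentence ``the associativity of $\gamma$ implies that $f_r$ and $f_l$ are maps of right and left $\calD$-modules.''
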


\begin{proof}
  By Theorem~\ref{thm: inthopf} and Lemmas~\ref{lem:opmod}
  and~\ref{lem:coal}, $\calD\circ\calC$ and $\calC\circ\calD$ are connected graded module
  coalgebras over $\calD$.
  We need only show that the maps $f_r$
  and $f_l$ are coalgebra maps and module maps. In terms of decomposable tensors, the maps
  take the form, 
\begin{gather*}
   f_r\left(\compose{c_0}{c_n}{d}\right)\ :=\
  \gamma\left(\compose{\lambda(c_0)}{\lambda(c_n)}{d}\right)
\quad\hbox{and}\quad
 f_l\left(\compose{d_0}{d_n}{c}\right)\ := \
 \gamma\left(\compose{d_0}{d_n}{\lambda(c)}\right).
\end{gather*}

These are coalgebra maps since both $\lambda$ and $\gamma$ are coalgebra maps.
The associativity of $\gamma$ implies that $f_r$ and
 $f_l$ are maps of right and left $\calD$-modules, respectively.
\end{proof}

\subsection{Examples of module coalgebra connections}\label{sec: Hopf op examples}

Eight of the nine compositions of coalgebras from Section \ref{sec: cccc examples}
are connections on one or both of the factors $\calC$ and $\calD$.

\begin{theorem}\label{thm:eightex}
  For $\calC\in\{\ssym,\ysym,\csym\}$, the coalgebra compositions $\calC\circ\csym$ and
  $\csym\circ\calC$ are connections on $\csym$.
  For $\calC \in \{\ssym,\ysym, \csym\}$, the coalgebra compositions $\calC\circ\ysym$ and
  $\ysym\circ\calC$ are connections on $\ysym$.
\end{theorem}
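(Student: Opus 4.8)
The plan is to realize each of the eight compositions as a connection by a single appeal to Theorem~\ref{thm:suffcover}: for a graded Hopf operad $\calD$ and a connected graded coalgebra $\calC$ equipped with a coalgebra map $\lambda\colon\calC\to\calD$, both $\calD\circ\calC$ and $\calC\circ\calD$ are connections on $\calD$ (via $f_r$ and $f_l$ respectively). By Proposition~\ref{prop:tree and comb}, both $\ysym$ and $\csym$ carry graded Hopf operad structures, so it suffices to produce, for each $\calC\in\{\ssym,\ysym,\csym\}$, a coalgebra map into $\csym$ and a coalgebra map into $\ysym$; feeding each such $\lambda$ into Theorem~\ref{thm:suffcover} then yields all the claimed connections simultaneously.

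For the connections on $\csym$ I would take $\lambda$ to be $\id\colon\csym\to\csym$, the map $\bkappa\colon\ysym\to\csym$, and the composite $\bkappa\circ\btau\colon\ssym\to\csym$. By Proposition~\ref{thm: Hopf examples}, $\btau$ and $\bkappa$ are surjective Hopf algebra maps, hence coalgebra maps, while the identity and composites of coalgebra maps are again coalgebra maps, and each source is connected and graded. For the connections on $\ysym$, two of the three data are immediate: $\id\colon\ysym\to\ysym$ and $\btau\colon\ssym\to\ysym$. The only missing ingredient is a coalgebra map $\csym\to\ysym$, and this is the one map running against the grain of the known surjections $\ssym\to\ysym\to\csym$.

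The main obstacle is therefore constructing this section $\iota\colon\csym\to\ysym$. Since the comb $c_n$ is itself a planar binary tree $t_n\in\y_n$, I would set $\iota(F_{c_n})=F_{t_n}$, so that $\bkappa\circ\iota=\id_{\csym}$ because $\kappa$ fixes combs. To check $\iota$ is a coalgebra map, I would compare coproducts: in $\csym$ one has $\Delta(F_{c_n})=\sum_{i+j=n}F_{c_i}\otimes F_{c_j}$, whereas in $\ysym$ the coproduct $\Delta(F_{t_n})$ is the sum over splittings of $t_n$ at its $n{+}1$ leaves. The key geometric observation is that splitting a comb along any leaf-to-root path cuts it into two smaller combs, so that $t_n\psplit(t_i,t_{n-i})$ as $i$ ranges over $0,\dotsc,n$; hence $\Delta(F_{t_n})=\sum_{i+j=n}F_{t_i}\otimes F_{t_j}=(\iota\otimes\iota)\Delta(F_{c_n})$, and $\iota$ is a coalgebra map. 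With $\iota$ in hand, Theorem~\ref{thm:suffcover} delivers $\csym\circ\ysym$ and $\ysym\circ\csym$ as connections on $\ysym$, and the only remaining work is the bookkeeping: confirming that the five compositions arising on the $\csym$ side and the five on the $\ysym$ side together exhaust the eight entries of Figure~\ref{fig: diamond}, with $\csym\circ\ysym$ and $\ysym\circ\csym$ appearing as connections on both factors.
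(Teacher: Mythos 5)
Your proposal is correct and follows essentially the same route as the paper: both reduce the theorem via Theorem~\ref{thm:suffcover} and Proposition~\ref{prop:tree and comb} to exhibiting coalgebra maps $\calC\to\calD$, and both use exactly $\id$, $\bkappa$, $\bkappa\btau$ into $\csym$ and $\id$, $\btau$, and the comb inclusion into $\ysym$. The only difference is that you verify explicitly that the inclusion $\csym\hookrightarrow\ysym$ respects coproducts (splitting a comb yields two combs), a point the paper asserts without proof; your verification is accurate.
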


\begin{proof}
 By Theorem~\ref{thm:suffcover} and Proposition~\ref{prop:tree and comb},
 we need only show the existence of coalgebra
 maps from $\calC$ to $\calD$, for $\calC \in \{\ssym,\ysym, \csym\}$ and
 $\calD\in\{\ysym,\csym\}$. 

 For $\calD = \csym$, the maps $\bkappa\btau$, $\bkappa$, and $\id$ are all
 coalgebra maps to $\csym$ (Proposition \ref{thm: Hopf examples}).
 For $\calD = \ysym$, the maps $\btau$ and $\id$ are coalgebra maps to $\ysym$.
 Lastly, combs are binary trees, and the induced inclusion map
 $\csym \hookrightarrow \ysym$ is a coalgebra map.
\end{proof}

Note that in particular, $\ysym\circ\csym$ is a connection on both $\csym$ and $\ysym$.
This yields two distinct one-sided Hopf algebra structures on $\ysym\circ\csym$.
Likewise, $\ysym\circ\ysym$ is a connection on $\ysym$ in two distinct ways (again leading
to two distinct one-sided Hopf structures).
In the remaining sections, we discuss three of the compositions of
Section~\ref{sec: cccc examples} which have appeared previously.


\section{A Hopf Algebra of Painted Trees}\label{sec: painted}

Our motivating example is the self-composition
$\psym := \ysym\circ \ysym$.
Elements of the fundamental basis of $\psym$ are
$F_p = \composeinline{c_0}{c_{|d|}}{d}$, where $c_0,\dotsc,c_{|d|}$ and $d$ are
elements of the fundamental basis of $\ysym$.
The indexing trees of $c_1,\dotsc,c_{|d|}$ and $d$ may be combined to form painted
trees as in Example~\ref{ex: painted}.
We describe the topological origin of painted trees and their relation to the 
multiplihedron, and we relate the Hopf structures of $\psym$ to the Hopf structures 
of $\msym$ developed in~\cite{FLS:2010}.

\subsection{Painted binary trees in topology.}
 A \demph{painted binary tree} is a planar binary tree $t$,
together with a (possibly empty) upper order ideal
of its node poset.
We indicate this ideal  by painting on top of a
representation of $t$. For clarity, we stop our painting in the
middle of edges.
Here are a few simple examples,
 \begin{equation}\label{Eq:PT}
  \raisebox{-15pt}{%
    \includegraphics{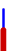}\, {,}   \qquad
    \includegraphics{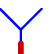}\!\! {,} \qquad
    \includegraphics{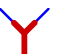} \!\! {,} \qquad
    \includegraphics{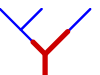}  \!\! {,} \qquad
    \includegraphics{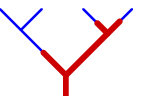}\!\! {,} \qquad
    \includegraphics{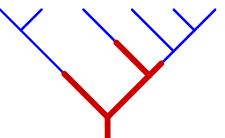} \! {.} }
 \end{equation}

An \demph{$A_n$-space} is a topological $H$-space with a weakly
associative multiplication of points~\cite{Sta:1963}.
(Products are represented by planar binary trees as these distinguish between possible
choices of associations.)
Maps between $A_{n}$-spaces preserve the multiplicative
structure only up to homotopy.
Stasheff~\cite{Sta:1963} described these maps combinatorially using
cell complexes called multiplihedra, while  Boardman and Vogt~\cite{BoaVog:1973},  used
spaces of painted  trees.
Both the spaces of trees and the cell complexes are
homeomorphic to convex polytope realizations of the multiplihedra as
shown in \cite{forcey1}.

If $f\colon ( X,\Blue{\Placeholder}) \to (Y,\Red{\ast})$ is a map of  $A_n$-spaces, then the
different ways to multiply and map $n$ points of
$X$ are represented by painted trees.
Unpainted nodes are multiplications in $X$, painted nodes are
multiplications in $Y$, and the beginning of the painting indicates that $f$ is applied to a
given point in $X$.
 See Figure \ref{fig: maps to painted}.
\begin{figure}[htb]
\[
    f({\blue a})\,{\red\bm\ast}\,\bigl(f({\blue b\,\Placeholder\,c})\,
     {\red\bm\ast}\,f({\blue d})\bigr)
    \ \longleftrightarrow \
    {\raisebox{-.5\height}{\includegraphics{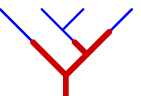}}}
\]
\caption{$A_n$-maps between $H$-spaces correspond to painted binary trees.}
\label{fig: maps to painted}
\end{figure}

Figure~\ref{fig:_multi_paint} shows the three-dimensional multiplihedron with its
vertices labeled by painted trees having three internal nodes. This
picture of the multiplihedron also shows that the
vertices are the elements of a lattice whose Hasse diagram is the one-skeleton
of the polytope in the view shown. See \cite{FLS:2010} for an explicit description of the
covering relations in terms of \emph{bi-leveled trees}. 

\subsection{Painted trees as bi-leveled trees} 
 
A \demph{bi-leveled tree} is a planar binary tree $t$ together with an order ideal $\setT$
of its node poset which contains the leftmost node, but none of its children.
We display bi-leveled trees corresponding to the painted trees of~\eqref{Eq:PT},
circling the nodes in $\setT$.
\[
   \includegraphics{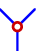}\, {,} \qquad
   \includegraphics{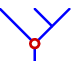}\!\! {,}\qquad
   \includegraphics{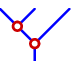}\!\! {,}\qquad
   \includegraphics{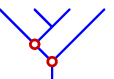}\!\! {,}\qquad
   \includegraphics{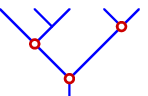}\!\! {,}\qquad
   \includegraphics{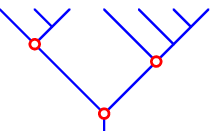}\! {.}
\]
Bi-leveled trees having $n{+}1$ internal nodes are in bijection with
painted trees having $n$ internal nodes, the bijection being given by pruning:
Remove the leftmost branch and node from a bi-leveled tree to get a tree whose order ideal
is the order ideal of the bi-leveled tree, minus
the leftmost node.
For an illustration of this and the inverse mapping, see Figure~\ref{fig: painted to bi-leveled}.
\begin{figure}[htb]
\[
   \includegraphics{figures/p2413.d.eps}
\ \xleftrightarrow{\raisebox{-12pt}{\ 
    \includegraphics{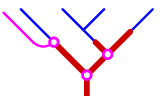}}\ \ }
\
  \includegraphics{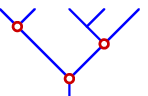} 
\]
\caption{Painted trees correspond to bi-leveled trees.}\label{fig: painted to bi-leveled}
\end{figure}

Let $\m_n$ be the set of bi-leveled trees with $n$ internal nodes.
In \cite{FLS:2010} we developed several algebraic structures on the graded vector space
$\msym$ with basis $F_b$ indexed by bi-leveled trees $b$, graded by the number of internal
nodes of $b$.
We also placed a $\ysym$--Hopf module structure on $\msym_+$, the positively graded part of $\msym$. We revisit this structure in Section \ref{sec: add structs}.

\subsection{The coalgebra of painted trees.}\label{sec: coalgebra psym}

Let $\p_n$ be the poset of painted trees on $n$ internal nodes, with
partial order inherited from the identification with bi-leveled trees $\m_{n+1}$.
We show $\p_3$ in Figure~\ref{fig:_multi_paint}.
\begin{figure}[htb]

\[
  \begin{picture}(260,238)(2,2)
   \put(2,5){\includegraphics[height=240pt]{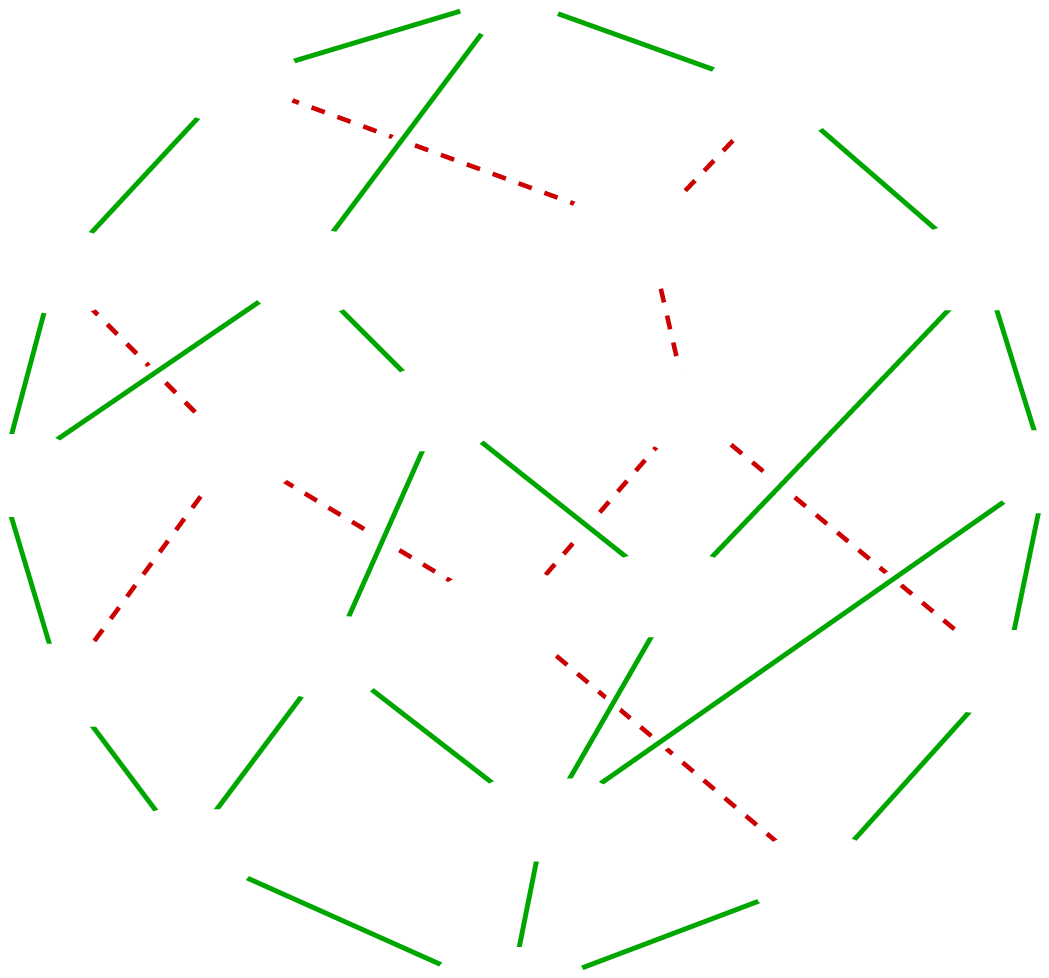}}

   \put(119,224){\includegraphics{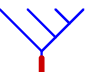}}

   \put(56,204){\includegraphics{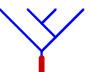}}
   \put(182,203){\includegraphics{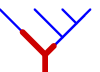}}

   \put(19,163){\includegraphics{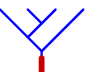}}
   \put(74,163){\includegraphics{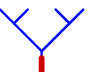}}
   \put(149,171){\includegraphics{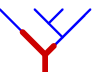}}
   \put(225,164){\includegraphics{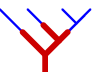}}

   \put( 4,116.5){\includegraphics{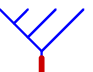}}
   \put(58,121){\includegraphics{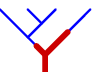}}
   \put(105,130.7){\includegraphics{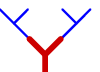}}
   \put(160.8,130.5){\includegraphics{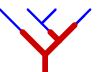}}
   \put(235,118){\includegraphics{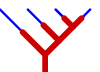}}

   \put(21, 70){\includegraphics{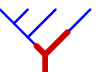}}
   \put(80.3, 74){\includegraphics{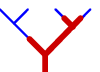}}
   \put(121, 84){\includegraphics{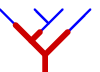}}
   \put(155.5, 90){\includegraphics{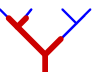}}
   \put(228, 72){\includegraphics{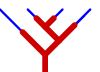}}

   \put(49, 32){\includegraphics{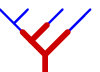}}
   \put(128.5, 38.5){\includegraphics{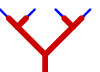}}
   \put(185, 24){\includegraphics{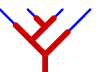}}

   \put(121, 0){\includegraphics{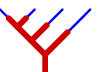}}
   \end{picture}
\]
\caption{The one-skeleton of the three-dimemsional multiplihedron, $\m_4$.}
\label{fig:_multi_paint}
\end{figure}
 We refer to~\cite{FLS:2010} for a description of the order on $\m_{n+1}$.
(Note that the map from $\p_\bb$ to $\m_\bb$ actually lands in
$\m_+$, which consists of the trees in $\m_\bb$ with one or more nodes.)

We reproduce the compositional coproduct defined in
Section~\ref{sec: cccc main results}.

\begin{definition}[Coproduct on $\psym$]\label{def: painted is cccc}
 Given a painted tree $p$, define the coproduct in the fundamental
 basis $\bigl\{ F_p \mid p \in \p_\bb \bigr\}$ by
 \begin{gather*}
        \Delta(F_p)\ =\ \sum_{p \psplit (p_0,p_1)} F_{p_0} \otimes F_{p_1} ,
 \end{gather*}
 where the painting in $p$ is preserved in the splitting $p\psplit(p_0,p_1)$.
\end{definition}

The counit $\varepsilon$ is projection onto $\psym_0$, which is spanned by
$F_{\includegraphics{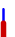}}$. 

Theorem~\ref{thm: cc cofree} describes the primitive elements of
$\psym = \ysym \circ \ysym$ in terms of the primitive elements of $\ysym$.
We recall the description of primitive elements of $\ysym$ as given
in~\cite{AguSot:2006}.
The set of trees $\y_n$ forms a poset whose covering relation is obtained by
moving a child node of a given node from the left to the right branch above the
given node.
Thus
 \[
  \raisebox{-7pt}{\includegraphics{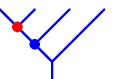}}
      {\color{red} \ \longrightarrow\  }
  \raisebox{-7pt}{\includegraphics{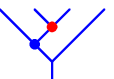}}
    {\color{blue}   \ \longrightarrow\  }
  \raisebox{-7pt}{\includegraphics{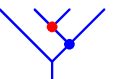}}
      {\color{red} \ \longrightarrow\   }
  \raisebox{-7pt}{\includegraphics{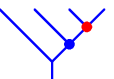}}
 \]
is an increasing chain in $\y_3$ (the moving vertices are marked with dots).

Let $\mu$ be the M\"obius function of $\y_n$ which is defined by
$\mu(t,s)=0$ unless $t\leq s$,
\[
   \mu(t,t)\ =\ 1\,, \qquad\mbox{and}\qquad
   \mu(t,r)\ =\ -\sum_{t\leq s< r} \mu(t,s)\,.
\]
We define a new basis for $\ysym$ using the M\"obius function.
For $t\in\y_n$, set
\[
   M_t\ :=\ \sum_{t\leq s} \mu(t,s) F_s\,.
\]
Then the coproduct for $\ysym$ with respect to this $M$-basis is still given by splitting
of trees, but only at leaves emanating directly from the right branch above the root:
\[
  \Delta( M_{\;\includegraphics{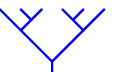}})\ =\
  1 \otimes M_{\;\includegraphics{figures/42513.eps}} \ +\
  M_{\;\includegraphics{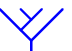}} \otimes M_{\;\includegraphics{figures/12.eps}}\ +\
  M_{\;\includegraphics{figures/42513.eps}}\otimes 1\,.
\]
A tree $t\in\y_n$ is \demph{progressive} if it has no branching along the right branch
above the root node.
A consequence of the description of the coproduct in this $M$-basis
is Corollary 5.3 of~\cite{AguSot:2006} that the set
$\{M_t \mid t \hbox{ is progressive}\}$ is a linear basis for the space of primitive
elements in $\ysym$.

Thus according to Theorem~\ref{thm: cc cofree} the cogenerating
primitives in $\psym$ are of two types:
\[
  \lrcompose{1}{c_1}{c_{n-1}}{1}{M_t} \qquad\hbox{ and }\qquad \zcompose{\,M_t\,}{1} \,,
\]
where $t$ is a progressive tree.

Here are some examples of the first type, 
 \begin{align*}
  M_{\;\includegraphics{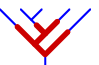}} \ &:= \
  \zcompose{1\treeseparator F_{\;\includegraphics{figures/1.eps}}\treeseparator 1
      \treeseparator 1}{M_{\;\includegraphics{figures/213.eps}}}
   \ = \
    F_{\;\includegraphics{figures/p24135.eps}}\ - \
    F_{\;\includegraphics{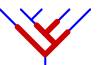}}\ ,\\
M_{\;\includegraphics{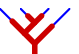}}\ &:= \
\zcompose{1\treeseparator 1\treeseparator 1\treeseparator
1}{M_{\;\includegraphics{figures/213.eps}}}
   \ = \
    F_{\;\includegraphics{figures/p1324.eps}} -
    F_{\;\includegraphics{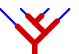}}  \,,  \rule{0pt}{20pt}
 \\
 M_{\;\includegraphics{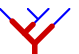}}\ &:=\
\zcompose{1\treeseparator F_{\;\includegraphics{figures/1.eps}}\treeseparator 1}%
       {M_{\;\includegraphics{figures/12.eps}}}
   \ = \
    F_{\;\includegraphics{figures/p2314.eps}} -
    F_{\;\includegraphics{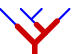}}\ ,   \rule{0pt}{20pt}
\intertext{and one of the second type,}
M_{\;\includegraphics{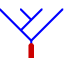}}\ &:= \
 \zcompose{M_{\;\includegraphics{figures/213.eps}}}{1}
   \ = \
    F_{\;\includegraphics{figures/p4213.eps}} -
    F_{\;\includegraphics{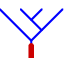}}  \,.  \rule{0pt}{20pt}
 \end{align*}

The primitives can be described in terms of M\"obius inversion on certain subintervals of
the multiplihedra lattice.
For the first type, the subintervals are those with a fixed unpainted forest of the form
$(\includegraphics{figures/0.eps}, t, \dotsc, s,\includegraphics{figures/0.eps})$.
For the second type, the subinterval consists of those trees whose painted part is trivial,
$\includegraphics{figures/p0.eps}$.
Each subinterval of the first type is isomorphic to $\y_m$ for some $m\leq n$, and the second
subinterval is isomorphic to $\y_{n}$.
Figure~\ref{fig:_multi_sub} shows the multiplihedron lattice $\p_3$, with these
subintervals highlighted.
\begin{figure}[htb]
\[
  \begin{picture}(260,238)(2,2)
   \put(2,5){\includegraphics[height=240pt]{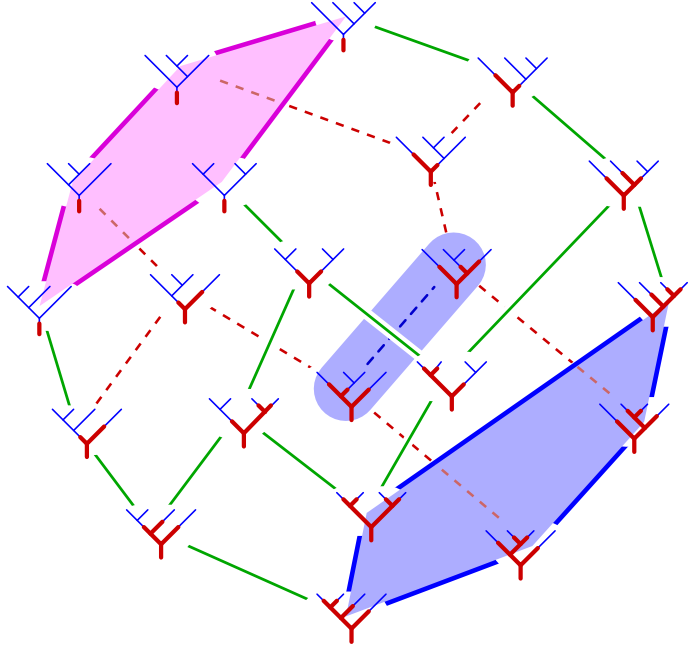}}

   \put(119,224){\includegraphics{figures/p4321.s.eps}}

   \put(56,204){\includegraphics{figures/p4312.s.eps}}
   \put(182,203){\includegraphics{figures/p3421.s.eps}}

   \put(19,163){\includegraphics{figures/p4213.s.eps}}
   \put(74,163){\includegraphics{figures/p4231.s.eps}}
   \put(151,173){\includegraphics{figures/p3412.s.eps}}
   \put(224,164){\includegraphics{figures/p2431.s.eps}}

   \put( 4,116.5){\includegraphics{figures/p4123.s.eps}}
   \put(58,121){\includegraphics{figures/p3214.s.eps}}
   \put(105,130.7){\includegraphics{figures/p3241.s.eps}}
   \put(160.8,130.5){\includegraphics{figures/p2413.s.eps}}
   \put(235,118){\includegraphics{figures/p1432.s.eps}}

   \put(21, 70){\includegraphics{figures/p3124.s.eps}}
   \put(80.3, 74){\includegraphics{figures/p2143.s.eps}}
   \put(121, 84){\includegraphics{figures/p2314.s.eps}}
   \put(159, 88){\includegraphics{figures/p2341.s.eps}}
   \put(228, 72){\includegraphics{figures/p1423.s.eps}}

   \put(49, 32){\includegraphics{figures/p2134.s.eps}}
   \put(128.5, 38.5){\includegraphics{figures/p1243.s.eps}}
   \put(185, 24){\includegraphics{figures/p1324.s.eps}}

   \put(121, 0){\includegraphics{figures/p1234.s.eps}}

   \end{picture}
\]
\caption{The multiplihedron lattice $\m_4$ showing the three
subintervals that yield primitives via M\"obius inversion.}
\label{fig:_multi_sub}
\end{figure}

\subsection{Hopf structures on painted trees.}\label{sec: add structs}

As determined in the proof of Theorem \ref{thm:eightex}, the identity map $\id:\ysym\to\ysym$
yields a connection $f_r \colon \psym \to \ysym$. In particular
(Theorem~\ref{cover_implies_Hopf}), $\psym$ is a one-sided Hopf algebra, a $\ysym$--Hopf
module, and a $\ysym$--comodule algebra. 
We discuss these structures, and relate them to structures placed on $\msym$
in~\cite{FLS:2010}. 

Let $p,q$ be painted trees with $|q|=n$.
In terms of the $F$-basis, $f_r$ simply forgets the painting level, e.g., 
$f_r(F_{\;\includegraphics{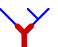}}) =F_{\;\includegraphics{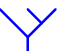}}$. 
Thus Theorem~\ref{cover_implies_Hopf} describes the product $F_p\cdot F_q$ in $\psym$
as
 \[
    F_p\cdot F_q\  =\  \sum_{p\psplit (p_0,p_1,\dots,p_n)} F_{(p_0,p_1,\dots,p_n)/q^+} \,,
 \]
where the painting in $p$ is preserved in the splitting
$(p_0,p_1,\dots,p_n)$, and $q^+$ signifies that $q$ is painted
completely before grafting.
 Here is an example of the product,
\[
     F_{\;\includegraphics{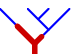}}\cdot
     F_{\;\includegraphics{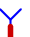}}
     \ =\
     F_{\;\includegraphics{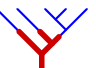}}\ +\
     F_{\;\includegraphics{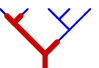}}\ +\
     F_{\;\includegraphics{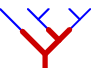}}\ +\
     F_{\;\includegraphics{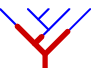}}\,.
\]

The painted tree \includegraphics{figures/p0.eps} with $0$ nodes is only
a right unit: for all $q\in\p_\bb$,
\[
    F_{\includegraphics{figures/p0.eps}} \cdot F_q\ =\
   F_{q^+} \qquad\hbox{and}\qquad F_q\cdot F_{\includegraphics{figures/p0.eps}}\ =\ F_q \,.
\]

Although the antipode is guaranteed to exist, we include a
proof for purpose of exposition.

\begin{theorem}\label{thm: painted is hopf}
 There are unit and antipode maps $\eta\colon \K \to \psym$ and
 $S \colon \psym \to \psym$ making $\psym$ a one-sided Hopf algebra.
\end{theorem}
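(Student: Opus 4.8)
The plan is to exhibit the unit map explicitly and then construct the antipode by the standard recursion available for any connected graded bialgebra, checking at each stage that the one-sided nature of the unit does not obstruct the construction. Since $\psym=\ysym\circ\ysym$ is a connection on $\ysym$ via the identity map (Theorem~\ref{thm:eightex}), Theorem~\ref{cover_implies_Hopf} already guarantees that $\psym$ is a one-sided Hopf algebra with one-sided unit $1_\psym=f_r^{-1}(1_\ysym)=F_{\includegraphics{figures/p0.eps}}$ and an antipode. So the work here is purely expository: to write down $\eta$ and $S$ concretely on the $F$-basis and verify the defining relations by hand.

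First I would set $\eta(1)=F_{\includegraphics{figures/p0.eps}}$, so that $\eta$ is the inclusion of $\K=\psym_0$. The computation $F_q\cdot F_{\includegraphics{figures/p0.eps}}=F_q$ (displayed just before the theorem) shows $\eta(1)$ is a right unit, while $F_{\includegraphics{figures/p0.eps}}\cdot F_q=F_{q^+}\neq F_q$ in general, confirming it is only one-sided. This identifies the unit map and records exactly which unital axiom holds.

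Next I would define $S$ recursively by the requirement $m_\psym(S\otimes\id)\Delta_\psym=\eta\varepsilon$. Because $\psym$ is graded and connected, for $|p|>0$ the coproduct splits as $\Delta(F_p)=F_{\includegraphics{figures/p0.eps}}\otimes F_p+F_p\otimes F_{\includegraphics{figures/p0.eps}}+\sum F_{p_0}\otimes F_{p_1}$ with the middle sum running over splittings into two pieces of strictly smaller degree. Applying $m_\psym(S\otimes\id)$ and solving the resulting equation for the top-degree term gives
\[
  S(F_p)\ =\ -F_p\ -\ \sum_{\substack{p\psplit(p_0,p_1)\\ 0<|p_0|<|p|}} S(F_{p_0})\cdot F_{p_1}\,,
\]
which is well-defined by induction on degree since every term on the right involves $S$ only in degrees $<|p|$. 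This is the usual Takeuchi-style recursion; grading and connectedness make the induction go through verbatim.

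The main point to verify—and the only place the one-sidedness matters—is that $S$ is a genuine antipode on the side consistent with the one-sided unit. I would check that $m_\psym(S\otimes\id)\Delta_\psym=\eta\varepsilon$ holds as an identity of maps, which is immediate from the defining recursion, and observe that the opposite convolution $m_\psym(\id\otimes S)\Delta_\psym$ need \emph{not} equal $\eta\varepsilon$ precisely because $F_{\includegraphics{figures/p0.eps}}$ is only a right unit. The expected obstacle is therefore not the existence of $S$ but keeping the bookkeeping straight: one must confirm that the recursion uses the product $m_\psym=\star\circ(\id\otimes f_r)$ consistently, so that each $S(F_{p_0})\cdot F_{p_1}$ is computed with the genuine (one-sided) multiplication of $\psym$ rather than a naive grafting. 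Once this is pinned down, the theorem follows, matching the general guarantee of Theorem~\ref{cover_implies_Hopf}.
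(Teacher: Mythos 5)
Your overall strategy coincides with the paper's: take $\eta(1)$ to be the degree-zero basis element $F_{\includegraphics{figures/p0.eps}}$, note it is only a right unit, and build a left antipode $S$ by induction on degree from the convolution identity $m_\psym(S\otimes\id)\Delta_\psym=\eta\varepsilon$, using gradedness and connectedness to isolate the top-degree term. However, your displayed recursion is wrong at exactly the point you yourself flagged as the one requiring care. Writing $\Delta(F_p)=F_{\includegraphics{figures/p0.eps}}\otimes F_p+F_p\otimes F_{\includegraphics{figures/p0.eps}}+\sum' F_{p_0}\otimes F_{p_1}$ and applying $m_\psym(S\otimes\id)$, the first term contributes $S(F_{\includegraphics{figures/p0.eps}})\cdot F_p=F_{\includegraphics{figures/p0.eps}}\cdot F_p=F_{p^+}$, the completely painted tree, \emph{not} $F_p$ --- precisely because $F_{\includegraphics{figures/p0.eps}}$ is only a right unit, a fact you record two paragraphs earlier and then fail to use. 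The correct recursion is
\[
  S(F_p)\ =\ -F_{p^+}\ -\ \sum_{\substack{p\psplit(p_0,p_1)\\ 0<|p_0|<|p|}} S(F_{p_0})\cdot F_{p_1}\,,
\]
which is what the paper writes in the form $-S(F_{\includegraphics{figures/p0.eps}})\cdot F_p-\sum' S(F_{p_0})\cdot F_{p_1}$. With your version the identity $m_\psym(S\otimes\id)\Delta_\psym=\eta\varepsilon$ fails whenever $p^+\neq p$: already for the one-node unpainted tree the paper's computation gives $S(F_{\;\includegraphics{figures/p21.eps}})=-F_{\;\includegraphics{figures/p12.eps}}$ (the painted one-node tree), whereas your formula would return minus the unpainted tree. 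The induction itself is otherwise sound, and once this constant term is corrected your argument is the paper's proof.
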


\begin{proof}
 We just observed that $\eta\colon1\mapsto F_{\includegraphics{figures/p0.eps}}$
 is a right unit for $\psym$. 
 We verify that a \emph{left antipode} exists.
 That is, there exists a map $S\colon\psym\to\psym$ such that
 $S(F_{\includegraphics{figures/p0.eps}})=F_{\includegraphics{figures/p0.eps}}$, and for $p\in\p_+$, we
 have 
 \begin{equation}\label{Eq:antipode_def}
  \sum_{p \psplit (p_0,p_1)} S(F_{p_0})\cdot F_{p_1} \ =\ 0\,.
 \end{equation}
 Since $\psym$ is graded, and $|p|=|p_0|+|p_1|$ whenever
 $p \psplit (p_0,p_1)$, we may recursively construct $S$ using induction on $|p|$.
 First, set
 $S(F_{\includegraphics{figures/p0.eps}})=F_{\includegraphics{figures/p0.eps}}$.
 Then, given any painted tree $p$, the only term involving $S(F_q)$
 in~\eqref{Eq:antipode_def} with $|q|=|p|$ is
 $S(F_p)\cdot F_{\includegraphics{figures/p0.eps}}=S(F_p)$,
 and so we may solve~\eqref{Eq:antipode_def} for $S(F_p)$ to obtain
\[
     S(F_p)\ :=\  -\!\sum_{\substack{p \psplit (p_0,p_1)\\
         \rule{0pt}{1.4ex}|p_0|,|p_1|>0}} S(F_{p_0})\cdot F_{p_1} \ -\
   S(F_{\includegraphics{figures/p0.eps}})\cdot F_{p}\,,
 \]
 expressing $S(F_p)$ in terms of previously defined values $S(F_q)$.
\end{proof}

For example, 
\begin{align*}
   S(F_{\;\includegraphics{figures/p21.eps}})\ 
   &=\ -S(F_{\includegraphics{figures/p0.eps}})\cdot
    F_{\;\includegraphics{figures/p21.eps}}\ =\ 
    -F_{\;\includegraphics{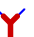}}\,,\qquad\mbox{and}\\
  S(F_{\;\includegraphics{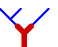}})\ 
  &=\ -S(F_{\;\includegraphics{figures/p21.eps}})\cdot F_{\;\includegraphics{figures/p12.eps}}
   \ -\ S(F_{\;\includegraphics{figures/p0.eps}})\cdot
  F_{\;\includegraphics{figures/p213.eps}}\ =\
    F_{\;\includegraphics{figures/p12.eps}}\cdot F_{\;\includegraphics{figures/p12.eps}}
   \ -\ F_{\;\includegraphics{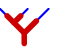}}\\
  &=\ F_{\;\includegraphics{figures/p123.eps}}   
  \ +\   F_{\;\includegraphics{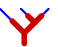}}   
  \ -\  F_{\;\includegraphics{figures/p123.eps}}\ =\ 
   F_{\;\includegraphics{figures/p132.eps}} \,.
\end{align*}

\begin{remark}\label{rem: one-sided}
 One may be tempted to artificially adjoin a true unit $e$ to $\psym$, but this only pushes
 the problem to the antipode map: $S(F_{\includegraphics{figures/p0.eps}})$ 
 cannot be defined if $\eta(1) = e$.
\end{remark}

The $\ysym$-Hopf module structure on $\psym$ of
Theorem~\ref{cover_implies_Hopf} has  coaction,
 \[
    \rho(F_p)\  =\  \sum_{p \psplit (p_0,p_1)} F_{p_0} \otimes F_{f(p_1)}\,,
 \]
where the painting in $p$ is preserved in the first half of the
splitting $(p_0,p_1)$, and forgotten in the second half.

Under the bijection between $\p_\bb$ and  $\m_+$ that
grows an extra node as in Figure \ref{fig: painted to bi-leveled},
the splittings and graftings on $\psym$ map to the restricted
splittings $\rsplit$ and graftings defined in \cite[Section~4.1]{FLS:2010}. 
Moreover, we can split and graft before or after the
bijection to achieve the same results. These facts allow the following corollary.

\begin{corollary}\label{thm: msym+ is Hopf}
The $\ysym$ action and coaction defined in \cite[Section~4.1]{FLS:2010}
make $\msym_+$ into a Hopf module isomorphic to the Hopf module
$\psym$. \hfill \qed
\end{corollary}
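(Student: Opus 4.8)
The plan is to exhibit an explicit isomorphism of $\ysym$-Hopf modules between $\psym$ and $\msym_+$ and then argue that it intertwines all the relevant structure maps. The natural candidate is the linear map $\Phi\colon\psym\to\msym_+$ induced by the pruning bijection of Figure~\ref{fig: painted to bi-leveled}, sending $F_p$ to $F_b$ where $b\in\m_{n+1}$ is the bi-leveled tree obtained by adjoining a leftmost node to the painted tree $p\in\p_n$. Since pruning is a bijection between $\p_n$ and $\m_{n+1}$ that respects the grading (degree $n$ on the left matches the positively graded piece $\msym_{n+1}$, reindexed), $\Phi$ is automatically a graded vector space isomorphism. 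The content of the corollary is that $\Phi$ commutes with the $\ysym$-action $\star$, the $\ysym$-coaction $\rho$, and the coproducts.

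First I would record precisely what ``the same results before or after the bijection'' means combinatorially. The key observation, already flagged in the paragraph preceding the corollary, is that the compositional splitting $p\psplit(p_0,p_1)$ of a painted tree corresponds under pruning to the restricted splitting $\rsplit$ of~\cite[Section~4.1]{FLS:2010}, and likewise grafting of painted forests corresponds to the grafting defined there. So the main verification is a compatibility square: for each structural operation $O$ on $\psym$ (coproduct, action, coaction) and its counterpart $O'$ on $\msym_+$, one checks $\Phi\circ O = O'\circ(\Phi\otimes\cdots)$ by tracking a single painted tree through both routes. Because all of these maps are defined as sums over splittings, and splittings correspond bijectively under pruning with the leftmost extra node never being a split point, each term on one side matches a unique term on the other.

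I would carry out the steps in this order: (i) verify $\Phi$ is a coalgebra isomorphism, i.e. $\Delta_{\msym_+}\Phi = (\Phi\otimes\Phi)\Delta_{\psym}$, using that the leftmost node of a bi-leveled tree is never cut by $\rsplit$ and so every splitting of $b=\Phi(p)$ arises from a unique splitting of $p$; (ii) verify $\Phi$ intertwines the right $\ysym$-actions, $\Phi(F_p\star F_q)=\Phi(F_p)\star F_q$, which reduces to matching the grafting of painted forests onto $q^+$ with the grafting in~\cite{FLS:2010}; and (iii) verify $\Phi$ intertwines the coactions $\rho$, where the painting is remembered in the left tensor factor and forgotten (via $f$) in the right, matching the coaction in~\cite{FLS:2010}. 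Once (i)--(iii) hold, $\Phi$ is an isomorphism of $\ysym$-Hopf modules, since a Hopf module map is precisely a simultaneous module and comodule map, and $\Phi$ is a bijection.

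The main obstacle will be the careful bookkeeping in step (ii): one must confirm that painting $q$ completely before grafting (the ``$q^+$'' convention in $\psym$) exactly reproduces the grafting rule of~\cite[Section~4.1]{FLS:2010} on the $\msym_+$ side, and that the extra leftmost node introduced by pruning interacts correctly with the graft point. This is essentially the assertion that the diagram of ``prune, then act'' versus ``act, then prune'' commutes, which is the combinatorial heart of the statement; everything else is a routine consequence of the term-by-term correspondence between $\psplit$ and $\rsplit$. Since the preceding paragraph already asserts precisely this commutativity (``we can split and graft before or after the bijection to achieve the same results''), the proof is short and the corollary follows immediately, which is why the paper records it with \texttt{\textbackslash qed} and no separate argument.
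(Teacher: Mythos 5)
Your proposal is correct and follows the paper's own route exactly: the paper's entire justification is the preceding paragraph's observation that the node-growing bijection between $\p_\bb$ and $\m_+$ carries splittings and graftings to the restricted splittings $\rsplit$ and graftings of \cite[Section~4.1]{FLS:2010}, and that these operations commute with the bijection, which is precisely your $\Phi$ and your steps (i)--(iii). The extra bookkeeping you outline is a more explicit version of what the paper leaves implicit, so there is nothing further to add.
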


The \demph{coinvariants} of a Hopf module
$\rho\colon\calE\to\calE\otimes\calD$ are elements $e\in\calE$ such that  
$\rho(e) = e \otimes 1$. 
The coinvariants for the action of Corollary~\ref{thm: msym+ is Hopf} were described
explicitly in \cite[Corollary~4.5]{FLS:2010}.  
In contrast to the discussion in Section \ref{sec: coalgebra psym},  M\"obius inversion in
the entire lattice $\m_\bb$ helps to find the coinvariants.


\section{A Hopf Algebra of Weighted Trees}\label{sec: weighted}

The composition of coalgebras $\ysym\circ\csym$
has fundamental basis indexed by forests of combs attached to binary trees,
which we will call weighted trees.
By the first statement of Theorem~\ref{thm:eightex}, it has a connection on
$\csym$ that gives it the structure of a one-sided Hopf algebra.
We examine this Hopf algebra in more detail.

\subsection{Weighted trees in topology}

In a forest of combs attached to a binary tree,
the combs may be replaced by corollae  or by a positive
\demph{weight} counting
the number of leaves in the comb.
These all give \demph{weighted trees}.
 \begin{equation}\label{Eq:composite_trees}
  \raisebox{-17pt}{${\displaystyle
   \includegraphics{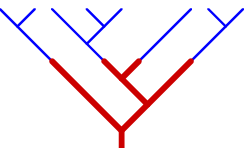} \quad\raisebox{17pt}{=}\quad
   \includegraphics{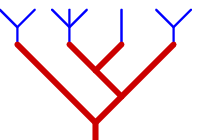} \quad\raisebox{17pt}{=}\quad
   \begin{picture}(50,37.5)(1,-3)
    \put(3,-3){\includegraphics{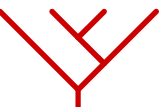}}
    \put( 0,28){$2$}    \put(15,28){$3$}
    \put(30,28){$1$}    \put(45,28){$2$}
   \end{picture}}$}
 \end{equation}
Let $\ck_n$ denote the weighted trees with weights summing to $n{+}1$. These index the
vertices of the $n$-dimensional \demph{composihedron},
$\ck(n{+}1)$~\cite{forcey2}. This sequence of polytopes parameterizes
homotopy maps between strictly associative and homotopy associative
$H$-spaces. Figure~\ref{F:comp} gives a picture of the composihedron
$\ck_3$.
\begin{figure}[hbt]
\[
  \begin{picture}(210,192)(0,2)
   \put(  8,   5){\includegraphics{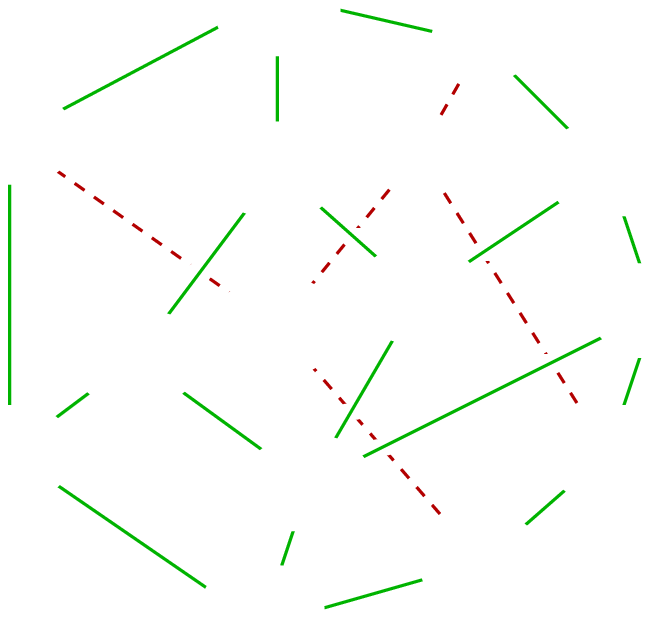}}
   \put( 77, 177){\includegraphics{figures/p4321.s.eps}}

   \put(135, 164){\includegraphics{figures/p3421.s.eps}}

   \put(  3, 136){\includegraphics{figures/p3214.s.eps}}
   \put( 75, 130){\includegraphics{figures/p3241.s.eps}}
   \put(117, 133){\includegraphics{figures/p2413.s.eps}}
   \put(170, 129){\includegraphics{figures/p2431.s.eps}}

   \put( 34,  75){\includegraphics{figures/p2143.s.eps}}
   \put( 77,  84){\includegraphics{figures/p2314.s.eps}}
   \put(116,  92){\includegraphics{figures/p2341.s.eps}}
   \put(183,  88){\includegraphics{figures/p1432.s.eps}}

   \put(  3, 47){\includegraphics{figures/p2134.s.eps}}
   \put( 83, 37){\includegraphics{figures/p1243.s.eps}}
   \put(135, 15){\includegraphics{figures/p1324.s.eps}}
   \put(168, 48){\includegraphics{figures/p1423.s.eps}}

   \put(75, 0){\includegraphics{figures/p1234.s.eps}}

  \end{picture}
\]
\caption{The one-skeleton of the three-dimensional composihedron.
} \label{F:comp}
\end{figure}
For small values of $n$, the composihedra $\ck(n)$ also appear as the
commuting diagrams in enriched bicategories~\cite{forcey2}.
These diagrams appear in the definition of pseudomonoids~\cite[Appendix~C]{AguMah:2010}.


\subsection{A Hopf algebra of weighted trees}

We describe the key definitions of Section~\ref{sec: cccc main results} and Section~\ref{sec: hopf}
for $\Blue{\cksym}:=\ysym\circ\csym$.
In the fundamental basis $\bigl\{ F_p \mid p \in \ck_\bb \bigr\}$ of $\cksym$,
the coproduct is
 \begin{gather*}
        \Delta(F_p)\ =\ \sum_{p \psplit (p_0,p_1)} F_{p_0} \otimes F_{p_1}\,,
 \end{gather*}
where the painting in $p\in\ck_\bb$ is preserved in the splitting $p\psplit(p_0,p_1)$. 
The counit $\varepsilon$ is projection onto $\cksym_0$, which is spanned by $F_{\includegraphics{figures/p0.eps}}.$
Here is an example in terms of weighted trees,
\newcommand{\nsp}{\hspace{-0.3pt}}
\[
   \Delta(F_{\CTIT{2}{\nsp 1}{2}})\ =\
    F_{\CTO{1}}\otimes F_{\CTIT{2}{\nsp 1}{2}} \ +\
    F_{\CTO{2}}\otimes F_{\CTIT{1}{1}{2}} \ +\
    F_{\CTI{2}{1}}\otimes F_{\CTI{1}{2}} \ +\
    F_{\CTIT{2}{1}{1}}\otimes F_{\CTO{2}}  \ +\
    F_{\CTIT{2}{\nsp 1}{2}}\otimes F_{\CTO{1}} \,.
\]

The primitive elements of $\cksym=\ysym\circ\csym$
have the form
\[
   F_{\;\CTO{2}}\ =\ \zcompose{\,F_{\;\includegraphics{figures/1.eps}}\,}{1}
   \qquad\mbox{ and }\qquad
    \lrcompose{1}{c_1}{c_{n-1}}{1}{M_t}\,,
\]
where $t$ is a progressive tree with $n$ nodes and $c_1,\dotsc,c_{n-1}$ are any elements
of $\csym$.
In terms of weighted trees, the indices of the second type are 
weighted progressive trees with weights of 1 on their leftmost and rightmost leaves.

Let $f_l\colon\cksym \to \csym$ be the connection given by Theorem \ref{thm:suffcover} (built from the coalgebra map $\bkappa$).
Then Theorem~\ref{cover_implies_Hopf} gives the product
\[
   F_p \cdot F_q\ :=\ f_l(F_p)\star F_q\,,\qquad
   \mbox{where }p,q\in\ck_\bb\,.
\]
In terms of the $F$-basis, $f_l$ acts on indices, sending a weighted tree $p$ to the unique
comb $f_l(p)$ with the same number of nodes as $p$.
The action $\star$ in the $F$-basis is given as follows: 
split $f_l(p)$ in all ways to make a forest of $|q|{+}1$ combs; 
graft each splitting onto the leaves of the forest of combs in $q$; 
comb the resulting forest of trees to get a new forest of combs. 
We illustrate one term in the product. 
Suppose that
$p=\CTI{2}{1}=\includegraphics{figures/p213.eps}$
and $q = \CTIT{1}{2}{1}=\includegraphics{figures/p2314.eps}$.
Then $f_l(p) =\includegraphics{figures/21.eps}$ 
and one way to split $f_l(p)$ gives the forest
$(\includegraphics{figures/0.eps}\;,\;
  \includegraphics{figures/1.eps}\;,\;
  \includegraphics{figures/0.eps}\;,\;
  \includegraphics{figures/1.eps}\;)$.
   Grafting this onto $q$ gives
   \raisebox{-3pt}{\includegraphics{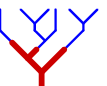}}\!,
   which after combing the forest yields the term
   $\raisebox{-3pt}{\includegraphics{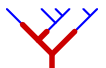}}
   =\CTIT{1}{3}{2}$ in the product $p\cdot q$.
   Doing this for the other nine splittings of $f_l(p)$ gives 
\[
    F_{\CTI{2}{1}}\cdot F_{\CTIT{1}{2}{1}} \ =\
	    F_{\CTIT{3}{2}{1}}\ +\   3  F_{\CTIT{1}{4}{1}}\ +\
	    F_{\CTIT{1}{2}{3}}\ +\   2  F_{\CTIT{2}{3}{1}}\ +\
	    F_{\CTIT{2}{2}{2}}\ +\    2 F_{\CTIT{1}{3}{2}}\,.
\]


\section{Composition trees and the Hopf algebra of simplices}\label{sec: simplices}

The simplest composition of Section~\ref{sec: cccc examples} is $\csym\circ\csym$.
As shown in Section~\ref{S:enumeration}, the graded component of total degree $n$ has
dimension $2^n$, indexed by trees with $n$ interior nodes obtained by grafting a forest of combs to
the leaves of a comb (which is painted).
Analogous to~\eqref{Eq:composite_trees}, these are weighted combs. 
As these are in bijection with compositions of $n{+}1$, we refer to them as \demph{composition
  trees}. 
\[
   \includegraphics{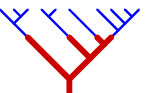}\
   \raisebox{10pt}{$\ =\ $}\
   \begin{picture}(28,24)
    \put(2.5,0){\includegraphics{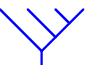}}
    \put( 0,17.5){\small 3} \put( 8,17.5){\small 2}
    \put(15.5,17.5){\small 1} \put(24,17.5){\small 4}
   \end{picture}\
   \raisebox{10pt}{$\ =  \  (3,2,1,4)$\,.}
\]

\subsection{Hopf algebra structures on composition trees}

The coproduct may again be described via splitting.
Since the composition tree $(1,3)$
has the four splittings
 \begin{equation}\label{Eq:comp_split}
   \raisebox{-3pt}{\includegraphics{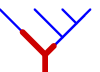}}\
   \raisebox{2pt}{$\xrightarrow{\ \curlyvee\ }$}\
   \Bigl(\;\raisebox{-3pt}{\includegraphics{figures/p0.d.eps}}\,,
         \raisebox{-3pt}{\includegraphics{figures/p3421.d.eps}}\Bigr)
   \,,\quad
   \Bigl(\raisebox{-3pt}{\includegraphics{figures/p12.d.eps}}\,,
         \raisebox{-3pt}{\includegraphics{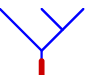}}\;\Bigr)
    \,,\quad
   \Bigl(\;\raisebox{-3pt}{\includegraphics{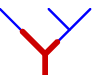}}\,,
         \raisebox{-3pt}{\includegraphics{figures/p21.d.eps}}\Bigr)
    \,,\quad
   \Bigl(\;\raisebox{-3pt}{\includegraphics{figures/p3421.d.eps}},\;
         \raisebox{-3pt}{\includegraphics{figures/p0.d.eps}}\;\Bigr)\,,
 \end{equation}
we have $\Delta(F_{1,3}) = F_{1}\otimes F_{1,3}+F_{1,1}\otimes F_{3}+
   F_{1,2}\otimes F_{2}+ F_{1,3}\otimes F_{1}$.

The identity map on $\csym$ gives two connections $\csym\circ\csym \to
\csym$ (using either $f_l$ or $f_r$ from Theorem \ref{thm:suffcover}).
This gives two new one-sided Hopf algebra structures on compositions.

\subsubsection{Hopf structure induced by $f_l$}
Let $p,q$ be composition trees and consider the product
$
   F_p \cdot F_q\ :=\ f_l(F_p)\star F_q\,.
$
At the level of indices in the $F$-basis, the connection $f_l$ sends the composition tree $p$ to the unique comb
$f_l(p)$ with the same number of vertices as $p$.
The action $f_l(F_p) \star F_q$ may be described as follows: 
split the comb $f_l(p)$ into a forest of $|q|{+}1$ combs in all possible ways;
graft each splitting onto the leaves of the forest in $q$;
comb the resulting forest of trees to get a new forest of combs.
For example,
$F_{1,3}\cdot F_{1,1}=F_{1,4}+F_{2,3}+F_{3,2}+F_{4,1}$, or alternatively,
 \begin{equation}\label{Eq:secondprodCC}
   F_{\;\includegraphics{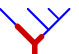}} \cdot   F_{\;\includegraphics{figures/p12.eps}}
   \ =\
   F_{\;\includegraphics{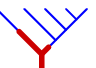}}\ +\ F_{\;\includegraphics{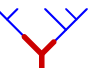}}\ +\
   F_{\;\includegraphics{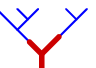}}\ +\ F_{\;\includegraphics{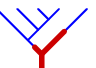}}\,.
 \end{equation}
This may be seen by unpainting and grafting the splittings~\eqref{Eq:comp_split}
onto the tree \includegraphics{figures/p12.eps}. Likewise, $F_{1,3}\cdot F_2=4F_{4}$, for no matter which of the four splittings of 
$f_l(1,3)$ 
is chosen, the grafting onto \includegraphics{figures/p21.eps} and subsequent combing will yield the same tree \includegraphics{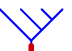}.

\subsubsection{Hopf structure induced by $f_r$}
Let $p,q$ be composition trees and consider the product
$
   F_p \cdot F_q\ :=\ F_p \star f_r(F_q)\,.
$
At the level of indices in the $F$-basis, the connection $f_r$ sends a composition tree
$q$ to the unique comb $f_r(q)$ with $|q|$ vertices. 
The action $F_p \star f_r(F_q)$ may be described as follows:  
first paint the comb $f_r(q)$;
next split the composition tree $p$ into a forest of $|q|{+}1$ composition trees in all
possible ways; finally, graft each forest onto the leaves of the painted tree $f_r(q)$ and
comb the resulting painted subtree (which comes from the nodes of $q$ and the painted
nodes of $p$). 
For example,
$F_{1,3}\cdot F_2=2F_{1,1,3}+F_{1,2,2}+F_{1,3,1}$, or alternatively,
 \begin{equation}\label{Eq:prodCC}
   F_{\;\includegraphics{figures/p3421.eps}} \cdot   F_{\;\includegraphics{figures/p21.eps}}
   \ =\
   F_{\;\includegraphics{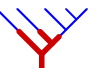}}\ +\ F_{\;\includegraphics{figures/p35421.eps}}\ +\
   F_{\;\includegraphics{figures/p35241.eps}}\ +\ F_{\;\includegraphics{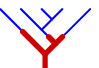}}\,.
 \end{equation}
This may be seen by grafting the splittings~\eqref{Eq:comp_split}
onto the tree $f_r(\includegraphics{figures/p21.eps}) = \includegraphics{figures/1.eps}$.

\subsection{Composition trees in topology}
A one-sided Hopf algebra $\delsym$ was defined
in~\cite[Section~7.3]{ForSpr:2010} whose $n$th graded piece had a basis
indexed by the faces of the ($n{-}1$)-dimensional simplex. 
We recount the product and coproduct introduced there. 
(The notation $\tilde{\delsym}$ was used
for this algebra in \cite{ForSpr:2010} to distinguish it from an algebra based only on the
vertices of the simplex.)  
Faces of the ($n{-}1$)-dimensional simplex correspond
to subsets $\setS$ of $[n]:=\{1,\dotsc,n\}$, so this is a Hopf algebra whose
$n$th graded piece also has dimension $2^n$, with fundamental basis
$F^{[n]}_{\setS}$. 

An ordered decomposition $n=p+q$ gives a splitting of $[n]$ into two pieces $[p]$ and
$\iota_p([q]):=\{p{+}1,\dotsc,n\}$.
Any subset $\setS\subseteq[n]$ gives a pair of subsets $\setS'\subseteq[p]$ and
$\setS''\subseteq[q]$,
\[
    \setS'\ :=\ \setS\cap[p]
   \qquad\mbox{and}\qquad
   \setS''\ :=\ \iota_p^{-1}(\setS\cap\{p{+}1,\dotsc,n\})\,.
\]
Then the coproduct is
\[
   \Delta(F^{[n]}_{\setS})\ =\
   \sum_{p+q=n} F^{[p]}_{\setS'}\otimes F^{[q]}_{\setS''}\,.
\]
For example, 
the coproduct on the basis element corresponding to $\{1\}\subseteq[3]$ is
\[
\Delta\bigl(F^{[3]}_{\{1\}}\bigr) \ = \ 
	F^{\emptyset}_\emptyset \otimes F^{[3]}_{\{1\}} +
	F^{[1]}_{\{1\}} \otimes F^{[2]}_{\emptyset} +
	F^{[2]}_{\{1\}} \otimes F^{[1]}_{\emptyset} +
	F^{[3]}_{\{1\}} \otimes F^{\emptyset}_{\emptyset} \,.
\]
This was motivated by constructions based on certain tubings of graphs.
In terms of tubings on an edgeless graph with three nodes, the coproduct takes the form 
\begin{equation}\label{Eq:DS_coprod}
  \Delta\raisebox{-2pt}{\includegraphics{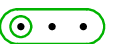}}\ =\
  \raisebox{-2pt}{\includegraphics{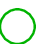}} \otimes
  \raisebox{-2pt}{\includegraphics{figures/1.3.g.eps}}  +
  \raisebox{-2pt}{\includegraphics{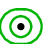}} \otimes
  \raisebox{-2pt}{\includegraphics{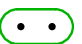}}  +
  \raisebox{-2pt}{\includegraphics{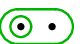}} \otimes
  \raisebox{-2pt}{\includegraphics{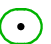}}  +
  \raisebox{-2pt}{\includegraphics{figures/1.3.g.eps}} \otimes
  \raisebox{-2pt}{\includegraphics{figures/0.0.g.eps}}\,.
\end{equation}
We leave it to the reader to make the identification (or see \cite{ForSpr:2010}).

The product $F^{[p]}_{\setS}\cdot F^{[q]}_{\setT}$ has one term for each shuffle of
$[p]$ with $\iota_p([q])$. 
The corresponding subset $\setR \subseteq [p{+}q]$ is the image of $[p]$ in the shuffle (not just $\setS$), together with the
image of $\setT$.
For example,
\begin{align}\label{Eq:delsim_prod}
  \raisebox{-3pt}{\includegraphics{figures/0.1.g.eps}}\cdot
  \raisebox{-3pt}{\includegraphics{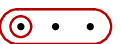}}&\ =\
	  \raisebox{-3pt}{\includegraphics{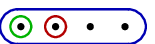}}\ +\
	  \raisebox{-3pt}{\includegraphics{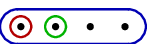}}\ +\
	  \raisebox{-3pt}{\includegraphics{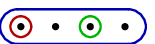}}\ +\ \vspace{-3pt}
	  \raisebox{-3pt}{\includegraphics{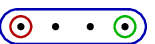}}\,,\\
\intertext{and}\vspace{-3pt}
  \raisebox{-3pt}{\includegraphics{figures/1.3.g.eps}}\cdot
  \raisebox{-3pt}{\includegraphics{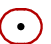}}&\ =\
	  \raisebox{-3pt}{\includegraphics{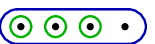}}\ +\
	  \raisebox{-3pt}{\includegraphics{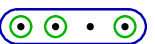}}\ +\
	  \raisebox{-3pt}{\includegraphics{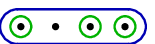}}\ +\
	  \raisebox{-3pt}{\includegraphics{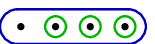}}\,. \nonumber
\end{align}

Let $\alpha$ be the bijection between subsets $\setS =\{
a,b,\dots,c,d\} \subseteq[n]$ and compositions
$\alpha(\setS)=(a,b{-}a,\dots,d-c,n{+}1{-}d)$ of $n{+}1$. Numbering the
nodes of a composition tree $1,\dotsc,n$ from left to right, the
subset of $[n]$ corresponding to the tree is comprised of the
colored nodes.
\[
  \begin{picture}(124,80)(0,-2)
   \put(  0,-2){\includegraphics{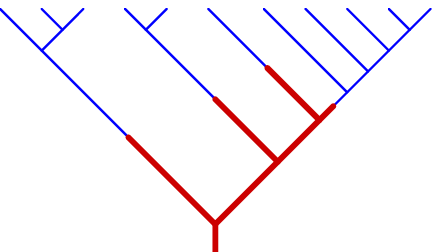}}
   \put(  4,70){1}  \put(15,70){2} \put(27,70){\Red{3}}
   \put( 39,70){4}  \put(51,70){\Red{5}} \put(65,70){\Red{6}}
   \put( 79,70){7}  \put(91,70){8} \put(104,70){9}
   \put(112,70){10}
  \end{picture}
  \raisebox{36pt}{$\longleftrightarrow\quad \{3,5,6\}\ \longleftrightarrow\ (3,2,1,4)\,.$}
\]

Applying this bijection to the indices of their fundamental bases
gives a linear isomorphism
$\balpha\colon\delsym\stackrel{\simeq}{\longrightarrow}\csym\circ\csym$.   
Comparing the definitions above,
this is clearly and isomorphism of coalgebras. 
Compare \eqref{Eq:comp_split} and \eqref{Eq:DS_coprod}. If we use
the second product on $\csym \circ \csym$ (induced by the connection $f_r$), then
$\balpha$ is nearly an isomorphism of the algebra, which can be
seen by comparing the examples~\eqref{Eq:prodCC}
and~\eqref{Eq:delsim_prod}. In fact, from the definitions given
above, it is an \demph{anti-isomorphism} ($\balpha(p\cdot
q)=\balpha(q)\cdot\balpha(p)$) of one-sided algebras. We may summarize
this discussion as follows.

\begin{theorem}
 The map $\balpha \colon \delsym \to (\csym \circ \csym,f_r)^{\mathrm{op}}$ is an isomorphism
 of one-sided Hopf algebras (with left-sided unit and right-sided antipode). 
\end{theorem}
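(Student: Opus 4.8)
The plan is to verify directly that the linear map $\balpha$ respects the coalgebra structure and reverses the algebra structure, and then to account for the one-sided unit and antipode by passing to the opposite algebra. Since $\alpha$ is a bijection between subsets $\setS\subseteq[n]$ and compositions of $n{+}1$, and both $\delsym$ and $\csym\circ\csym$ have $2^n$-dimensional graded pieces in degree $n$, the map $\balpha$ is automatically a graded linear isomorphism; the remaining content is a comparison of the explicitly defined structure maps on the fundamental bases.

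First I would check that $\balpha$ is a morphism of coalgebras. The coproduct on $\delsym$ splits $[n]$ as $[p]\sqcup\iota_p([q])$ for each $p+q=n$ and restricts $\setS$ to the two blocks, while the coproduct on $\csym\circ\csym$ splits a composition tree at a leaf. Under $\alpha$, splitting $[n]$ at $p$ corresponds exactly to splitting the associated composition tree, and the two induced subsets match the restrictions $\setS'$ and $\setS''$; the model computation is \eqref{Eq:comp_split} against \eqref{Eq:DS_coprod}. As the coproduct of an algebra agrees with that of its opposite, this already shows $\balpha$ is a coalgebra map, hence a coalgebra isomorphism onto $(\csym\circ\csym,f_r)^{\mathrm{op}}$.

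The heart of the argument is the multiplicative statement, that $\balpha(F_{\setS}\cdot F_{\setT})=\balpha(F_{\setT})\cdot\balpha(F_{\setS})$ in $(\csym\circ\csym,f_r)$, so that $\balpha$ is an algebra isomorphism onto the opposite algebra. Here I would set up a bijection between the summands of the two products. The product in $\delsym$ runs over shuffles of $[p]$ with $\iota_p([q])$, recording the subset $\setR$ consisting of the entire image of $[p]$ together with the image of $\setT$. The $f_r$-product $F_p\cdot F_q=F_p\star f_r(F_q)$ paints the comb $f_r(q)$, splits the composition tree $p$ into $|q|{+}1$ pieces, grafts them onto the painted skeleton, and combs; the colored nodes of the result are those of $q$ together with the painted nodes of $p$. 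Matching shuffles with splittings and tracking which nodes become colored yields the identity, with the reversal of factors arising precisely because in the $f_r$-product it is the second factor that supplies the painted skeleton, whereas in $\delsym$ it is the first factor $[p]$ whose image is fully colored; the worked examples \eqref{Eq:prodCC} and \eqref{Eq:delsim_prod} are the prototype. This bookkeeping---pinning down the summand bijection and verifying the coloring exactly---is the step I expect to be the main obstacle.

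Finally I would transport the one-sided structures. By Theorem~\ref{cover_implies_Hopf} and Theorem~\ref{thm:eightex}, $(\csym\circ\csym,f_r)$ is a one-sided Hopf algebra with right-sided unit $f_r^{-1}(1)$ and left-sided antipode. Passing to the opposite algebra turns the right unit into a left unit (if $m(x\otimes 1)=x$ then $m^{\mathrm{op}}(1\otimes x)=x$), and, being a graded one-sided bialgebra with one-dimensional degree-zero part, the opposite again admits a one-sided antipode defined recursively exactly as in the proof of Theorem~\ref{thm: painted is hopf}; relative to the left unit this is a right-sided antipode. These are precisely the conventions under which $\delsym$ is a one-sided Hopf algebra in \cite{ForSpr:2010}. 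Since $\balpha$ is simultaneously a coalgebra isomorphism and an algebra isomorphism onto $(\csym\circ\csym,f_r)^{\mathrm{op}}$, it intertwines the counits and the recursively determined one-sided antipodes and sends unit to unit, so it is an isomorphism of one-sided Hopf algebras.
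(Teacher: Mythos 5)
Your proposal is correct and follows essentially the same route as the paper, whose ``proof'' is precisely the preceding discussion: $\balpha$ is a coalgebra isomorphism by matching the splittings of composition trees with the decompositions $p+q=n$, and an anti-isomorphism of the $f_r$-products by matching shuffles with splittings, whence an isomorphism onto the opposite one-sided Hopf algebra. Your extra care in transporting the one-sided unit and antipode through the opposite-algebra construction is a detail the paper leaves implicit, but it is consistent with Theorem~\ref{cover_implies_Hopf} and adds nothing contrary to the paper's argument.
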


\begin{corollary}\label{thm: F-S is cofree}
The one-sided Hopf algebra of simplices introduced in \cite{ForSpr:2010} is cofree as a coalgebra.
\end{corollary}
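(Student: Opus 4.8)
The plan is to deduce this as an immediate consequence of the isomorphism $\balpha$ supplied by the preceding theorem, combined with the cofreeness results already in hand. The reduction is entirely structural, so I expect essentially no hard analytic content; the only point requiring care is the bookkeeping about which structures the superscript $\mathrm{op}$ affects.

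First I would recall that $\csym$ is cofree: by Proposition~\ref{thm: cofree examples} it is cofreely cogenerated by its (one-dimensional) space of primitives. Applying Theorem~\ref{thm: cc cofree} to $\calC=\calD=\csym$ then shows that the composition $\csym\circ\csym$ is again cofree, and in fact that theorem gives an explicit description of its cogenerating primitives in terms of those of $\csym$. Thus $\csym\circ\csym$ is cofree as a coalgebra.

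Next I would observe that cofreeness is a property of the underlying coalgebra alone and is insensitive to the algebra structure. The preceding theorem provides an isomorphism of one-sided Hopf algebras $\balpha\colon\delsym\to(\csym\circ\csym,f_r)^{\mathrm{op}}$, and the superscript $\mathrm{op}$ modifies only the product, leaving the coproduct of $(\csym\circ\csym,f_r)^{\mathrm{op}}$ equal to that of $\csym\circ\csym$. Hence $\balpha$ is in particular an isomorphism of coalgebras $\delsym\cong\csym\circ\csym$; this coalgebra compatibility was already noted in the text when the coproduct formulas were matched, cf.\ \eqref{Eq:comp_split} and \eqref{Eq:DS_coprod}. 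Since cofreeness transports across coalgebra isomorphisms, and $\csym\circ\csym$ is cofree, it follows that $\delsym$ is cofree, cogenerated by the $\balpha$-preimages of the cogenerators of $\csym\circ\csym$.

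The only potential obstacle is the verification that passing to the opposite algebra does not disturb the coalgebra structure, and that $\balpha$ genuinely intertwines the two coproducts rather than merely the products; but this is exactly the coalgebra-isomorphism content already contained in the preceding theorem, so the corollary is a direct corollary with nothing further to prove.
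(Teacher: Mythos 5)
Your argument is correct and is exactly the intended one: the paper leaves the corollary without an explicit proof precisely because it follows immediately from the coalgebra isomorphism $\balpha$ (unaffected by the $\mathrm{op}$ on the product) together with Theorem~\ref{thm: cc cofree} applied to the cofree coalgebra $\csym$. Nothing is missing.
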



%
\end{document}
-------------------------------------------------------------------
\bibliographystyle{abbrv}

\bibliography{bibl}
\label{sec:biblio}

\end{document}